\newlist{primenumerate}{enumerate}{1}
\setlist[primenumerate,1]{label={\arabic*$'$}}
\DeclareSymbolFont{cyrletters}{OT2}{wncyr}{m}{n}
\DeclareMathSymbol{\Sha}{\mathalpha}{cyrletters}{"58}
\newtheorem{theorem}{Theorem}[section]
\newtheorem{lemma}[theorem]{Lemma}
\newtheorem{proposition}[theorem]{Proposition}
\newtheorem{definition}[theorem]{Definition}
\newtheorem*{notn}{Notation}
\newtheorem*{theorem*}{Theorem}
\numberwithin{equation}{section}
\newtheorem{lthm}{Theorem}
\theoremstyle{remark}
\newtheorem{remark}[theorem]{Remark}
\newcommand{\EC}{\mathsf{E}}
\newcommand{\fp}{\mathfrak{p}}
\newcommand{\Gal}{\operatorname{Gal}}
\newcommand{\PP}{\mathbb{P}}
\newcommand{\Qp}{\mathbb{Q}_p}
\newcommand{\Zp}{\mathbb{Z}_p}
\newcommand{\ord}{\mathrm{ord}}
\newcommand{\GL}{\mathrm{GL}}
\newcommand{\Z}{\mathbb{Z}}
\newcommand{\Q}{\mathbb{Q}}
\newcommand{\cL}{\mathcal{L}}
\newcommand{\cK}{\mathcal{K}}
\newcommand{\cO}{\mathcal{O}}
\newcommand{\Sel}{\mathrm{Sel}}
\newcommand{\Char}{\mathrm{char}}
\newcommand{\corank}{\mathrm{corank}}
\newcommand{\fP}{\mathfrak{P}}
\newcommand{\cyc}{\mathrm{cyc}}
\newcommand{\ac}{{\mathrm{ac}}}
\newcommand{\ann}{{\mathrm{ann}}}
\theoremstyle{plain} 
\newtheorem*{intr@thm}{\intr@thmname}
\newtheorem*{c@njecture}{\conjn@name}
\newcommand{\myl@bel}[2]{
 \protected@write \@auxout {}{\string \newlabel {#1}{{#2}{\thepage}{#2}{#1}{}} }
 \hypertarget{#1}{}
 } 
\newenvironment{labelledconj}[3][]
 {
 \def\conjn@name{#2}
 \begin{c@njecture}[{#1}]\myl@bel{#3}{#2}
 }
 {
 \end{c@njecture}
 }
\newcommand{\mylabel}[2]{#2\def\@currentlabel{#2}\label{#1}}
\title[]{Mazur's Growth Number Conjecture in the Rank One Case}
\author[D.~Kundu]{Debanjana Kundu}
\address[Kundu]{Department of Mathematical and Statistical Sciences\\ UTRGV \\ 1201 W University Dr.\\ Edinburg, TX 78539\\ USA}
\email{debanjana.kundu@utrgv.edu}
\author[A.~Lei]{Antonio Lei}
\address[Lei]{Department of Mathematics and Statistics\\University of Ottawa\\
150 Louis-Pasteur Pvt\\
Ottawa, ON\\
Canada K1N 6N5}
\email{antonio.lei@uottawa.ca}
\keywords{Mazur's Growth Number Conjecture}
\subjclass[2020]{Primary 11R23, 11G05}
\begin{document}

\maketitle

\begin{abstract}
Let $p\geq 5$ be a prime number.
Let $\mathsf{E}/\mathbb{Q}$ be an elliptic curve with good ordinary reduction at $p$.
Let $K$ be an imaginary quadratic field where $p$ splits, and such that the generalized Heegner hypothesis holds. Under mild hypotheses, we show that if the $p$-adic height of the Heegner point of $\mathsf{E}$ over $K$ is non-zero, then Mazur's conjecture on the growth of Selmer coranks in the $\mathbb{Z}_p^2$-extension of $K$ holds.    
\end{abstract}

\section{Introduction}
Let $K$ be a number field. Let $\EC$ be an elliptic curve defined over $K$ and $L/K$ be an algebraic extension.
The Mordell--Weil theorem asserts that when $L/K$ is a \textit{finite} extension, the group $\EC(L)$ of $L$-rational points is a finitely generated $\Z$-module.
In other words, the (algebraic) rank of $\EC(L)$ is finite.
However, when $L$ is an {\it infinite} algebraic extension of $K$, the situation is more intricate.
For an elliptic curve $\EC/\Q$ and $p$ a prime number, the works of K.~Kato \cite{kato04} and D.~Rohrlich \cite{rohrlich88} imply that over the cyclotomic $\Zp$-extension $\Q_{\cyc}$ of $\Q$, the (algebraic) rank of $\EC(\Q_\cyc)$ is finite.
However, when $K/\Q$ is an imaginary quadratic field, and $K_\ac$ is the anti-cyclotomic $\Zp$-extension of $K$, it is possible that $\EC(K_\ac)$ has infinite rank; see \cite[(1.9) or (1.10)]{Gre_PCMS}.
In \cite[\S~18, p.~201]{Maz84}, B.~Mazur formulated a conjecture predicting that $K_\ac$ is the \textit{only} $\Zp$-extension of $K$ over which the Mordell--Weil rank can be infinite.

\begin{labelledconj}{Mazur's Growth Number Conjecture}{Mazur-conj} Suppose $\EC/\Q$ is an elliptic curve, $K$ is an imaginary quadratic field, and $p$ is a prime of good ordinary reduction for $\EC$.
Let $\cK/K$ be a $\Zp$-extension, and write $\cK_n$ to denote the unique subfield of degree $p^n$ of $K$ in $\cK$.
Further suppose that every prime of bad reduction of $\EC$ is finitely decomposed in $\cK/K$.
Then for $n \gg 0$
\[
\operatorname{corank}_{\Zp}\Sel_{p^\infty}(\EC/\cK_n) = cp^n + O(1),
\]
where 
\[
c = \begin{cases}
    0 & \textrm{ if } \cK \neq K_{\ac} \text{ or } (\EC,K) \textrm{ has `sign' } +1\\
    1 & \textrm{ if } \cK = K_{\ac}, (\EC,K) \textrm{ has `sign' } -1 \textrm{ and is `generic'}\\
    2 & \textrm{ if } \cK = K_{\ac}, (\EC,K) \textrm{ has `sign' } -1 \textrm{ and is `exceptional'}.
\end{cases}
\]
\end{labelledconj}

A few remarks are in order regarding the conjecture.
The integer $c$ is called the \emph{growth number}.
The pair $(\EC, K)$ is called `generic' if $\EC$ has no CM, or the CM field of $\EC$ is different from $K$.
On the other hand, the pair $(\EC, K)$ is called `exceptional' if $\EC$ has CM by (an order in) $K$.
The \emph{sign} of $(\EC, K)$ is defined in \cite[\S~6]{Maz84}) to mean
\begin{itemize}
    \item The sign of the functional equation of $L(\EC/K, s)$ in the `generic' case \emph{and}
    \item The sign of the functional equation of  $L(\varphi, s)$ in the `exceptional' case, where $\varphi$ is the Hecke character of $K$ satisfying
    \[
    L(\EC/\Q, s) = L(\varphi, s).
    \]
\end{itemize}
It is pertinent to note that the `sign' of $(\EC, K)$ is \textit{not} the same as the sign of the functional equation of $L(\EC/K, s)$ in the `exceptional' case.
 In particular, \ref{Mazur-conj} predicts that $c=0$ precisely
when $\cK \neq K_{\ac}$ or when one of the following situations is true
\begin{itemize}
    \item $(\EC, K)$ is `generic' and $L(\EC/K, s)$ has sign +1,
    \item $(\EC, K)$ is `exceptional' and $L(\EC/\Q, s) = L(\varphi, s)$ has sign +1.
\end{itemize}

\ref{Mazur-conj} is often formulated in terms of the growth of Mordell--Weil ranks.
The two formulations are equivalent under the condition that the $p$-parts of the Tate--Shafarevich groups are finite.
We assume that this is the case in the discussion below.

We record some instances in which the conjecture has been proven.
\begin{enumerate}
    \item[a)] The exceptional case of \ref{Mazur-conj} is well-understood from the works of K.~Rubin, D.~Rohrlich, B.~Gross--D.~Zagier, and R.~Greenberg (see for example \cite[Theorems~1.7 and 1.8]{Gre_PCMS}).
Let $\EC/\Q$ be an elliptic curve \textit{with complex multiplication} by an imaginary quadratic field $K$.
If $p$ is any prime, the Mordell--Weil rank of $\EC$ remains bounded in every $\Zp$-extension which is different from $K_{\ac}$.
Over the anti-cyclotomic $\Zp$-extension $K_{\ac}/K$, the Mordell--Weil rank of a CM elliptic curve is \emph{unbounded} if $p$ is a prime of good \textit{supersingular} reduction.
More explicitly, for $n\gg 0$
\[
\operatorname{rank}_{\Z} \EC(K_{\ac,n}) - \operatorname{rank}_{\Z} \EC(K_{\ac, n-1}) = \epsilon_n \phi(p^n),
\]
where $\phi(p^n)$ is the Euler totient function and $\epsilon_n = 0$ or 2, depending on the parity of $n$.
On the other hand, if $p$ is a prime of good \textit{ordinary} reduction then the boundedness of the Mordell--Weil rank of $\EC$ in $K_{\ac}/K$ depends on the parity of the order of vanishing of the Hasse--Weil $L$-function at $s=1$.
In particular, if $L(\EC/\Q, s)$ has an odd (resp. even) order 0 at $s=1$, then the rank growth is unbounded (resp. bounded).
In the case of unbounded rank growth, the rank increases by exactly $2\phi(p^n)$ at the $n$-th layer for $n\gg 0$.
\item[b)]More recently, there has been some partial progress made towards this conjecture for non-CM elliptic curves in \cite{GHKL, KMS}.
Let $K$ be an imaginary quadratic field in which $p$ is unramified and write $K_\infty$ for the $\Zp^2$-extension of $K$.
When $p$ is a prime of good \emph{ordinary} reduction, it was proven in \cite{KMS} that the number of $\Zp$-extensions of $K$ where the rank of $\EC$ does not stay bounded is at most $\min_H\{\lambda_H\}$, where $H$ runs over subgroups of $\Gal(K_\infty/K)$ such that $K_\infty^H$ is an admissible\footnote{in the sense of Definition~1.1 of \emph{op. cit.}} $\Zp$-extension of $K$ satisfying the $\mathfrak{M}_H(G)$-property, and $\lambda_H$ is the $\lambda$-invariant of the Pontryagin dual of the $p$-primary Selmer group of $E$ over that $\Zp$-extension.
In particular, the authors proved that \ref{Mazur-conj} holds when the Mordell--Weil rank of $\EC(K)$ is zero or one, under some hypotheses (see in particular Theorems 9.3 and 9.4 in \textit{op. cit.}).
On the other hand,  the authors in \cite{GHKL} proved Mazur's conjecture under a semi-cyclity hypothesis, which can be verified via computations, and is different from the hypotheses imposed in \cite{KMS}.
\item[c)] When $\EC/\Q$ has good \emph{supersingular} reduction at $p$, an analogue of Mazur's Conjecture was formulated in \cite{leisprung}.
For partial progress towards this conjecture in the case of non-CM elliptic curves, we refer the reader to \cite{leisprung,hunglim}.
For elliptic curves of rank $0$ over $K$ and $p\geq 3$ a prime of supersingular reduction with $a_p =0$ such that both primes above $p$ in $K$ are completely ramified in $K_{\ac}/K$, this conjecture was resolved in \cite{GHKL}.
\end{enumerate}

The main goal of this article is to provide an \textit{analytic proof} of \ref{Mazur-conj} for elliptic curves that are of rank $1$ over $K$ when $p$ is a prime of good ordinary reduction.
More precisely, we prove the following theorem.

\begin{lthm}\label{thmA}
Fix a prime $p\geq 5$.
Let $\EC/\Q$ be a non-CM elliptic curve with good ordinary reduction at $p$.
Let $K$ be an imaginary quadratic field where $p$ splits, and such that \eqref{GHH}, \eqref{h-IMC} and \eqref{tor} hold. Writing $z_{\mathrm{Heeg}}$ to denote the Heegner point of $\EC$ over $K$, if the $p$-adic height $ \langle z_{\mathrm{Heeg}}, z_{\mathrm{Heeg}} \rangle_0\ne 0$, then \ref{Mazur-conj} holds.    
\end{lthm}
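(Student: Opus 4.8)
The plan is to translate the statement into the structure of a single two-variable Iwasawa module and to read off every growth number from the factorisation of its characteristic ideal. Write $\Gamma=\Gal(K_\infty/K)\cong\Zp^2$ for the Galois group of the $\Zp^2$-extension and $\Lambda=\Zp\lb\Gamma\rb$. Every $\Zp$-extension $\cK/K$ sits inside $K_\infty$ and is cut out by a subgroup $H=\Gal(K_\infty/\cK)\cong\Zp$; fixing a topological generator $\sigma_\cK$ of $H$, the element $(\sigma_\cK-1)$ generates a height-one prime of $\Lambda$ with $\Lambda/(\sigma_\cK-1)=\Zp\lb\Gal(\cK/K)\rb=:\Lambda_\cK$. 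Set $X=\Sel_{p^\infty}(\EC/K_\infty)^\vee$. First I would record a control theorem identifying $\Sel_{p^\infty}(\EC/\cK)^\vee$ with $X/(\sigma_\cK-1)X$ up to pseudo-null error, so that the standard asymptotic for finitely generated $\Lambda_\cK$-modules gives $\corank_{\Zp}\Sel_{p^\infty}(\EC/\cK_n)=c_\cK\,p^n+O(1)$ with $c_\cK=\rank_{\Lambda_\cK}X/(\sigma_\cK-1)X$. In this way the whole conjecture reduces to computing the single invariant $c_\cK$ for each $\cK$.

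The second step is to express $c_\cK$ through $\Char_\Lambda(X)$. Using the cyclotomic specialisation $X/(\sigma_{\cyc}-1)X\sim\Sel_{p^\infty}(\EC/K_{\cyc})^\vee$, which is $\Lambda_{\cyc}$-cotorsion by Kato's work together with the cyclotomic main conjecture, the inequality $\rank_\Lambda X\le\rank_{\Lambda_{\cyc}}(X/(\sigma_{\cyc}-1)X)=0$ forces $X$ to be $\Lambda$-torsion, so $\Char_\Lambda(X)=(\mathcal{F})$ is defined. Since pseudo-null $\Lambda$-modules are supported in codimension $\ge 2$ while each $(\sigma_\cK-1)$ has height one, the structure theorem yields $c_\cK=\#\{\text{elementary divisors of }X\text{ divisible by }(\sigma_\cK-1)\}$; in particular $c_\cK>0$ if and only if $(\sigma_\cK-1)\mid\mathcal{F}$. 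Invoking \eqref{h-IMC} I would replace $(\mathcal{F})$ by the two-variable $p$-adic $L$-function $\mathcal{L}_p$, up to a power that does not change which primes divide it, so that everything reduces to the divisors of $\mathcal{L}_p$ among the augmentation-type primes $(\sigma-1)$.

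The third step extracts this divisibility from the analytic data. The hypothesis \eqref{GHH} forces root number $-1$ uniformly along the anticyclotomic tower, so every anticyclotomic twist has vanishing central value; by interpolation $\mathcal{L}_p$ vanishes identically on the anticyclotomic line, i.e. $(\sigma_{\ac}-1)\mid\mathcal{L}_p$, and we may write $\mathcal{L}_p=(\sigma_{\ac}-1)\,\mathcal{G}$. Restricting to the cyclotomic line recovers the cyclotomic $p$-adic $L$-function of $\EC/K$, whose first derivative at the central point is the value of $\mathcal{G}$ at the trivial character; a $p$-adic Gross--Zagier formula identifies this derivative, up to an explicit nonzero factor, with $\langle z_{\mathrm{Heeg}},z_{\mathrm{Heeg}}\rangle_0$. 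Thus the assumption $\langle z_{\mathrm{Heeg}},z_{\mathrm{Heeg}}\rangle_0\ne 0$ says exactly that the constant term $\mathcal{G}(\mathrm{triv})$ is nonzero. Because every element of an augmentation ideal $(\sigma-1)$ vanishes at the trivial character, $\mathcal{G}$ lies in no such ideal; and as $(\sigma_\cK-1)\ne(\sigma_{\ac}-1)$ are distinct height-one primes when $\cK\ne K_{\ac}$, a divisibility $(\sigma_\cK-1)\mid\mathcal{L}_p=(\sigma_{\ac}-1)\mathcal{G}$ would force $(\sigma_\cK-1)\mid\mathcal{G}$, which is impossible. Hence $c_\cK=0$ for every $\cK\ne K_{\ac}$, while along the anticyclotomic line the non-torsion Heegner point together with \eqref{h-IMC} gives $c_{\ac}=\rank_{\Lambda_{\ac}}\Sel_{p^\infty}(\EC/K_{\ac})^\vee=1$. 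As $\EC$ is non-CM with sign $-1$, these are precisely the predicted growth numbers, which establishes \ref{Mazur-conj}.

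I expect the main obstacle to be the third step: making precise the two-variable $p$-adic Gross--Zagier formula that identifies $\mathcal{G}(\mathrm{triv})$ with $\langle z_{\mathrm{Heeg}},z_{\mathrm{Heeg}}\rangle_0$, and transporting it through \eqref{h-IMC} to $\Char_\Lambda(X)$, since this is what converts a single height non-vanishing into the vanishing of $c_\cK$ in all non-anticyclotomic directions at once. The remaining ingredients---the control theorem, the torsionness of $X$, and the elementary observation that an element with nonzero constant term avoids every augmentation prime---are comparatively formal.
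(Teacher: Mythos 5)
Your proposal is correct and follows essentially the same route as the paper: the key non-vanishing of $\frac{\partial L_p}{\partial Y}(0,0)$ via Disegni's $\Lambda_{\ac}$-adic Gross--Zagier formula and the nonzero height, transferred through \eqref{h-IMC} and the structure theory of $\Lambda$-modules to rule out growth in every non-anticyclotomic direction, with Howard's theorem and Greenberg-type control results handling the anticyclotomic line. Your coprimality step (writing $L_p = Y\cdot\mathcal{G}$ and noting that $\mathcal{G}(0,0)\neq 0$ keeps $\mathcal{G}$, hence $L_p$, out of every other augmentation-type prime $(f_{a,b})$) is a repackaging of the paper's explicit computation that the derivative of $\pi_{a,b}(L_p)$ at the origin is a nonzero multiple of $\frac{\partial L_p}{\partial Y}(0,0)$.
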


We refer the reader to \cite[p.~29]{zhang2001heights} for construction of the Heegner point.
Although the original formulation of \ref{Mazur-conj} assumes that every
prime of bad reduction of $\EC$ is finitely decomposed in $K_\infty/K$, this condition is not required in our proof of Theorem~\ref{thmA}. The generalized Heegner hypothesis \eqref{GHH} requires that the number of bad primes that are inert in $K$ be even.
In particular, these primes (if they exist) split completely in $K_\ac$, and therefore are not finitely decomposed in $K_\infty$.
To prove our result in full generality, we require one inclusion of the Iwasawa main conjecture over $K_\infty/K$; a proof of this inclusion has recently been announced under some mild hypotheses in \cite{BCS24,BSTW}.

Our proof of Theorem~\ref{thmA} is analytic in the sense that it uses the theory of $p$-adic $L$-functions.
It is based on the work of D.~Disegni \cite{disegni17}, which provides a formula relating the cyclotomic derivative of a 2-variable $p$-adic $L$-function to the cyclotomic $p$-adic
heights of Heegner points over the anti-cyclotomic tower. Note that such formula was previously proved by F.~Castella \cite{Cas17} and B.~Howard \cite{How05} under additional hypotheses.

\emph{Outlook:}
The algebraic proof of \ref{Mazur-conj} in \cite{GHKL} could not be extended to the case where $\EC/\Q$ is an elliptic curve without complex multiplication that has positive rank over $K$ and $p$ is a prime of good supersingular reduction.
The analytic proof of Mazur's conjecture presented in this article does not generalize immediately either (see Remark~\ref{rk:ss}).
New ideas seem to be necessary for further study of the supersingular case.

The known algebraic and analytic methods for proving Mazur's conjecture for non-CM elliptic curves crucially require that the Selmer corank of $\EC/K$ be at most 1.
An obvious direction of future investigation is to better understand the higher corank situation.

Finally, we expect that our proof of Theorem~\ref{thmA} can be extended to abelian varieties of $\GL_2$-type that are defined over $\Q$. 
One may even consider abelian varieties attached to Hilbert modular forms and study how the Selmer corank grows over abelian $p$-adic extensions of CM fields since the work of Disegni \cite{disegni17,disegni} applies in this generality.

After the first version of the article was written, the preprint \cite{RM-preprint} appeared on arXiv. In this work, similar ideas to those utilized in this article were used to study Hilbert's 10th problem in $\Zp$-towers of number fields over $K$.
The results therein depend on the semi-cyclicity hypothesis introduced in \cite{GHKL}.

\section*{Acknowledgement}
This work was started during AL's visit to UTRGV in spring 2024.
He thanks the School of Mathematical and Statistical Sciences at UTRGV for their hospitality.
The authors thank Francesc Castella, Jeff Hatley, and Rylan Gajek-Leonard for interesting discussions during the preparation of this article.
We also thank Daniel Disegni for answering our questions related to his work. 
AL's research is supported by the NSERC Discovery Grants Program RGPIN-2020-04259.
Finally, we thank the referee for their helpful feedback and comments on an earlier version of the article, which led to many improvements.

\section{Notation and Preliminaries}

\subsection{Elliptic curves and Selmer groups}
We collect the notation and assumptions that will be in place throughout the article.

Let $\EC/\Q$ be a non-CM elliptic curve of conductor $N=N_{\EC}$ with good ordinary reduction at a prime $p\geq 5$.
Let $K$ be an imaginary quadratic field with $p\cO_K = \fp \overline{\fp}$, where $\fp\ne\overline{\fp}$.
We write $K_\infty/K$ to denote the $\Zp^2$-extension of $K$ and set $G_\infty=\Gal(K_\infty/K)$.
The Iwasawa algebra $\Zp\llbracket G_\infty\rrbracket$ is denoted by $\Lambda$.

We write $N = N^+ N^-$ with $N^+$ (respectively $N^-$) equal to the product of the prime factors of $N$ which are split (respectively inert) in $K$.
Throughout this article, we assume that the pair $(\EC,K)$ satisfies the \textbf{generalized Heegner hypothesis}.
More concretely,\vspace{0.2cm}
\begin{itemize}
\item[(\mylabel{GHH}{\textbf{GHH}})]  $N^-$ is the square-free product of an \textit{even} number of primes.
\end{itemize}
\vspace{0.2cm}

Let $\Sigma$ be a finite set of primes in $\Q$ containing $p$ and all primes of bad reduction for $\EC$.
For any field $F/\Q$, define $\Sigma(F)$ to be the set of places of $F$ lying above those in $\Sigma$, and write $G_\Sigma(F)$ for the Galois group of the maximal extension of $F$ that is unramified outside of $\Sigma(F)$.
Furthermore, for any $v \in \Sigma$ and any finite extension $F/\Q$, write 
\[
J_v(\EC/F)= \bigoplus_{w\mid v} H^1(F_w,\EC)[p^\infty].
\]
When $\mathcal{F}/F$ is an infinite extension of $F$, set
\[
J_v(\EC/\mathcal{F}) = \varinjlim_{F\subseteq F' \subseteq \mathcal{F}} J_v(\EC/F').
\]

\begin{definition}\label{def:selmer-ord} Let $\EC/\Q$ be an elliptic curve.
Let $\Sigma$ be any finite set of primes containing those dividing $pN$.
For any extension $L/\Q$, define the Selmer group 
\[
\Sel_{p^\infty}(\EC/L):=\ker \left( H^1(G_{\Sigma}(L),\EC[p^\infty]) \rightarrow 
\prod_{v \in \Sigma} J_v(\EC/L) \right).
\]
\end{definition}

\subsection{\texorpdfstring{$\Zp$}{}-extensions}\label{sec:zp-extensions}
Let $K_\cyc$ and $K_\ac$ denote the cyclotomic and anti-cyclotomic $\Zp$-extension of $K$, respectively.
We fix two topological generators $\sigma$ and $\tau$ of $G_\infty$ such that
\begin{align*}
\overline{\langle \sigma\rangle}&=\ker\left(G_\infty\longrightarrow\Gal(K_\cyc/K)\right),\\
\overline{\langle \tau\rangle}&=\ker\left(G_\infty\longrightarrow\Gal(K_\ac/K)\right).
\end{align*}
We may identify $\Lambda$ with the ring of power series $\Zp\llbracket X,Y\rrbracket$ by sending $\sigma$ to $X+1$ and $\tau$ to $Y+1$.

\begin{lemma}
\label{lem:Zp-ext}
Let $\mathcal{K}/K$ be a $\Zp$-extension.
There exists a unique element $(a:b)\in\PP^1(\Zp)$ such that 
\[
\overline{\langle \sigma^a\tau^b\rangle}=\ker\left(G_\infty\rightarrow\Gal(\mathcal{K}/K)\right),
\]
\end{lemma}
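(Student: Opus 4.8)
The plan is to translate the statement into the structure theory of finitely generated modules over the discrete valuation ring $\Zp$. Since $K$ is abelian over $\Q$, Leopoldt's conjecture holds for $K$, so $K_\infty$ is the compositum of all $\Zp$-extensions of $K$; in particular $\mathcal{K}\subseteq K_\infty$, and restriction gives a surjection $\pi\colon G_\infty\twoheadrightarrow\Gal(\mathcal{K}/K)\cong\Zp$. Using the fixed generators I would identify $G_\infty$ with the additive group $\Zp^2$ via $\sigma\mapsto(1,0)$ and $\tau\mapsto(0,1)$, so that $\sigma^a\tau^b$ corresponds to $(a,b)$ for $a,b\in\Zp$ and $\overline{\langle\sigma^a\tau^b\rangle}$ corresponds to the $\Zp$-submodule $\Zp\cdot(a,b)$. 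Writing $H=\ker\pi$, the claim then becomes: $H=\Zp\cdot(a,b)$ for a pair $(a,b)$ that is unimodular (not both coordinates in $p\Zp$), unique up to multiplication by $\Zp^\times$, that is, for a unique $(a:b)\in\PP^1(\Zp)$.

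For existence I would argue concretely. The homomorphism $\pi$ is given by $\pi(x,y)=\alpha x+\beta y$ for some $\alpha,\beta\in\Zp$, and surjectivity forces $(\alpha,\beta)$ to be unimodular, say $\alpha\in\Zp^\times$ (the case $\beta\in\Zp^\times$ being symmetric). The vector $v=(-\beta,\alpha)$ then lies in $H$, is unimodular, and hence generates a saturated (that is, $p$-divisibly closed) rank-one submodule $\Zp\cdot v$. On the other hand $H$ is itself saturated of rank one, because $\Zp^2/H\hookrightarrow\Zp$ is torsion-free; equivalently, the sequence $0\to H\to\Zp^2\to\Zp\to 0$ splits as $\Zp$ is free, exhibiting $H$ as a rank-one free direct summand. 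Two saturated rank-one submodules with one contained in the other must coincide, so $H=\Zp\cdot v$, and we may take $(a:b)=(-\beta:\alpha)\in\PP^1(\Zp)$.

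For uniqueness, suppose $\Zp\cdot(a,b)=\Zp\cdot(a',b')=H$ with both vectors unimodular. Then $(a',b')=\lambda(a,b)$ and $(a,b)=\mu(a',b')$ for some $\lambda,\mu\in\Zp$; comparing and using that the vectors are nonzero gives $\lambda\mu=1$, so $\lambda\in\Zp^\times$ and hence $(a:b)=(a':b')$ in $\PP^1(\Zp)$. I do not expect any serious obstacle here, as the content is entirely the linear algebra of lines in $\Zp^2$ over the principal ideal domain $\Zp$. The only points that warrant care are, first, recording that every $\Zp$-extension of $K$ is contained in $K_\infty$ (so that $\pi$ is defined and surjective), and second, matching the equivalence ``generators of $H$ up to $\Zp^\times$'' with the definition of $\PP^1(\Zp)$ as unimodular rows modulo units, so that the procyclic generator $\sigma^a\tau^b$ is indexed by a genuine point of the projective line rather than by an arbitrary pair.
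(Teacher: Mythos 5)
Your proposal is correct and follows essentially the same route as the paper, whose proof is just a two-line appeal to the fact that $\Zp$-extensions of $K$ correspond to closed subgroups $H\subseteq G_\infty\simeq\Zp^2$ with $G_\infty/H\simeq\Zp$, these being classified by $\PP^1(\Zp)$. You simply supply the details the paper leaves implicit --- the Leopoldt/compositum point guaranteeing $\mathcal{K}\subseteq K_\infty$, and the linear algebra identifying saturated rank-one submodules of $\Zp^2$ with unimodular vectors up to units --- all of which is accurate.
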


\begin{proof} 
Each $\Zp$-extension corresponds to a unique subgroup of $H$ of $G_\infty$ such that $G_\infty/H\simeq\Zp$.
Since $G_\infty=\overline{\langle\sigma,\tau\rangle}\simeq\Zp^2$, the lemma follows.
\end{proof}

Given $(a:b)\in\PP^1(\Zp)$, we shall always pick a representative where $a$ and $b$ are elements of $\Zp$ such that $\min(\ord_p(a),\ord_p(b))=0$.

\begin{notn}
Let $\mathcal{K}/K$ be a $\Zp$-extension.
We adopt the following notation throughout the article.
\begin{itemize}
\item Set $\mathcal{K}=K_{a,b}$, where $(a:b)\in \PP^1(\Zp)$ is given as in Lemma~\ref{lem:Zp-ext}.
\item Set $\Gamma_{a,b}=\Gal(K_{a,b}/K)$, $H_{a,b}=\Gal(K_\infty/K_{a,b})$, and $\Lambda_{a,b}=\Zp\llbracket\Gamma_{a,b}\rrbracket$.
\item Set $\pi_{a,b}:\Lambda\rightarrow \Lambda_{a,b}$ for the map induced by the natural projection map $G_\infty\rightarrow\Gamma_{a,b}$.
\item Set $f_{a,b}=(1+X)^a(1+Y)^b-1$.
\item Set $\Gamma_\cyc$ and $\Lambda_{\cyc}$ (respectively, $\Gamma_{\ac}$ and $\Lambda_{\ac}$) as $\Gamma_{1,0}$ and $\Lambda_{1,0}$ (respectively, $\Gamma_{0,1}$ and $\Lambda_{0,1}$). 
\end{itemize}
\end{notn}
Note that $f_{a,b}$ is an irreducible element of $\Lambda$ and we have the identification $\Lambda_{a,b}=\Lambda/(f_{a,b})$. When $(a:b)=(1:0)$, this gives $\Lambda_\cyc= \Zp\llbracket Y\rrbracket$, whereas $(a:b)=(0:1)$ gives $\Lambda_\ac= \Zp\llbracket X\rrbracket$. Furthermore, if $(a:b)\neq (a':b')$, then $f_{a,b}$ and $f_{a',b'}$ are coprime to each other as elements of $\Lambda$.

\subsection{Pseudo-null Iwasawa modules}

We review the definition and basic properties of pseudo-null modules over the Iwasawa algebra $\Lambda$.
\begin{definition}
Let $M$ be a finitely generated $\Lambda$-module.
Let $\fP$ be a prime ideal of $\Lambda$.
\item[\textup{i)}] The localization of $M$ at $\fP$ is denoted by $M_\fP$.
\item[\textup{ii)}] A $\Lambda$-module $M$ is called \textbf{pseudo-null} if $M_\fP=0$ for all prime ideals of $\Lambda$ of height $\le 1$.
Equivalently, if $\fP$ is a prime ideal such that $\ann_\Lambda(M)\subseteq \fP$, then the height of $\fP$ is at least 2.
\end{definition}

Let $M$ be a $\Lambda$-module and $(a:b)\in \PP^1(\Zp)$.
As $\Gamma_{a,b}=G_\infty/H_{a,b}$, the action of $\Lambda$ on $M$ induces a natural $\Lambda_{a,b}$-module structure on the homology groups $H_i(H_{a,b}, M)$ for $i\ge0$.
When $M$ is a finitely generated pseudo-null $\Lambda$-module, we have:

\begin{lemma}
\label{cohomology groups of pseudo-null modules are torsion}
Let $M$ be a finitely generated pseudo-null $\Lambda$-module.
Let $(a:b)\in \PP^1(\Zp)$.
The homology groups $H_0(H_{a,b}, M)$ and $H_1(H_{a,b}, M)$ are $\Lambda_{a,b}$-torsion. 
\end{lemma}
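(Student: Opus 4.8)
The plan is to compute the two homology groups explicitly and then read off torsionness directly from pseudo-nullity. Since $\Gamma_{a,b}\cong\Zp$, the subgroup $H_{a,b}=\ker(G_\infty\to\Gamma_{a,b})$ is procyclic, topologically generated by $\gamma:=\sigma^a\tau^b$; under the identification $\Lambda=\Zp\llbracket X,Y\rrbracket$ the element $\gamma-1$ corresponds precisely to $f_{a,b}=(1+X)^a(1+Y)^b-1$. I would use the standard free resolution of the trivial module over $\Zp\llbracket H_{a,b}\rrbracket\cong\Zp\llbracket T\rrbracket$, namely $0\to\Zp\llbracket T\rrbracket\xrightarrow{\cdot T}\Zp\llbracket T\rrbracket\to\Zp\to 0$ with $T=\gamma-1$. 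Tensoring with $M$ shows that $H_\ast(H_{a,b},M)$ is the homology of the two-term complex $M\xrightarrow{\,f_{a,b}\,}M$, so that
\[
H_0(H_{a,b},M)=M/f_{a,b}M,\qquad H_1(H_{a,b},M)=M[f_{a,b}],
\]
the latter denoting the kernel of multiplication by $f_{a,b}$ on $M$.

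Both of these are annihilated by $f_{a,b}$, hence are modules over $\Lambda_{a,b}=\Lambda/(f_{a,b})$; they are moreover finitely generated over $\Lambda_{a,b}$, since $M/f_{a,b}M$ is a quotient of the Noetherian module $M$ and $M[f_{a,b}]$ is a submodule of it. The key structural fact I would invoke is that $\Lambda_{a,b}\cong\Zp\llbracket\Gamma_{a,b}\rrbracket\cong\Zp\llbracket T\rrbracket$ is a two-dimensional regular local domain. Consequently a finitely generated $\Lambda_{a,b}$-module $N$ is torsion if and only if it vanishes after inverting the nonzero elements of $\Lambda_{a,b}$, i.e. if and only if the generic point does not lie in $\operatorname{Supp}(N)$, equivalently $\dim\bigl(\Lambda_{a,b}/\ann_{\Lambda_{a,b}}(N)\bigr)\le 1$.

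It then remains to bound this dimension using pseudo-nullity, which is where the content sits. By definition $M$ pseudo-null means that no prime of height $\le 1$ contains $\ann_\Lambda(M)$, so $\operatorname{ht}\ann_\Lambda(M)\ge 2$ in the three-dimensional regular ring $\Lambda$, whence $\dim\bigl(\Lambda/\ann_\Lambda(M)\bigr)\le 1$. For $N$ equal to either homology group one has $\ann_\Lambda(N)\supseteq\ann_\Lambda(M)$ --- for $M/f_{a,b}M$ because $\ann_\Lambda(M)$ kills every quotient of $M$, and for $M[f_{a,b}]$ because it is a submodule --- so $\dim\bigl(\Lambda/\ann_\Lambda(N)\bigr)\le 1$ as well. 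Finally, since $f_{a,b}\in\ann_\Lambda(N)$, the quotient rings $\Lambda/\ann_\Lambda(N)$ and $\Lambda_{a,b}/\ann_{\Lambda_{a,b}}(N)$ coincide, so the latter also has dimension $\le 1<2=\dim\Lambda_{a,b}$, and the torsion criterion applies to both $N=H_0(H_{a,b},M)$ and $N=H_1(H_{a,b},M)$. I expect the only genuinely delicate point to be the passage from ``codimension $\ge 2$ in $\Lambda$'' to ``torsion over the quotient $\Lambda_{a,b}$''; the cleanest way to finesse it is exactly the observation that $f_{a,b}$ already lies in $\ann_\Lambda(N)$, so that no height-drop computation along $\Lambda\to\Lambda_{a,b}$ is needed and the two annihilator quotients are literally the same ring.
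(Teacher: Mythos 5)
Your proof is correct and follows essentially the same route as the paper: the paper's (much terser) argument is exactly the Krull-dimension bound you spell out, namely that pseudo-nullity forces $\dim\bigl(\Lambda/\ann_\Lambda(M)\bigr)\le 1$, that the homology groups inherit this bound, and that dimension $\le 1$ over the two-dimensional domain $\Lambda_{a,b}$ means torsion. Your explicit identification $H_0=M/f_{a,b}M$, $H_1=M[f_{a,b}]$ via the free resolution, and the annihilator bookkeeping through $f_{a,b}\in\ann_\Lambda(N)$, are precisely the details the paper leaves implicit.
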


\begin{proof}
Since $M$ is a $\Lambda$-pseudo-null module, it has Krull dimension at most 1. 
The $\Lambda_{a,b}$-modules $H_1(H_{a,b},M)$ and $H_0(H_{a,b}, M)$ also have Krull dimension  $\leq 1$.
In particular, they are $\Lambda_{a,b}$-torsion. 
\end{proof}

\subsection{\texorpdfstring{$p$-adic $L$-function}{}}

Since $p\geq 5$ is assumed to be a prime of good ordinary reduction, by the work of H.~Hida \cite{Hid85} and B.~Perrin-Riou \cite{PR87-Invent, PR88} (see also \cite[Theorem~A]{disegni17}), there exists a 2-variable $p$-adic $L$-function $L_p(X,Y) \in \Q_p \otimes_{\Zp} \Lambda$ interpolating the central critical values for the Rankin--Selberg convolution of (the modular form attached to) $\EC$ with the theta series attached to finite order characters of $G_\infty$.
We write $L_p(X,0)$ to denote its projection to the anti-cyclotomic line.
More precisely, it is the image under the map $\Qp \otimes_{\Zp} \Lambda \rightarrow \Qp \otimes_{\Zp} \Lambda_{\ac} $ induced by the natural projection $G_\infty \rightarrow \Gamma_{\ac}$.

Under the identification of $\Lambda$ with $\Lambda_{\ac}\llbracket \Gamma_{\cyc}\rrbracket=\Zp\llbracket X\rrbracket\llbracket Y\rrbracket$, we can expand
\begin{equation}
\label{power series in Y}
L_p(X,Y) = L_p(X,0) + \frac{\partial L_p}{\partial Y}(X,0) Y +  \frac{1}{2}\cdot\frac{\partial^2 L_p}{\partial Y^2}(X,0) Y^2 + \ldots
\end{equation}
as a power series in $Y= \tau - 1$ with coefficients in $\Qp\otimes_{\Zp}\Zp\llbracket X\rrbracket$.
Moreover, in view of the assumption \eqref{GHH}, the sign of the functional equation of $L(E/K,s)$ is $-1$, which forces $L_p(X,0)=0$; see \cite[Proposition~1.4]{Cas17}.

\subsubsection{\texorpdfstring{$\Lambda_{\ac}$-adic height pairing}{}}
\label{sec: Lambda adic height pairing}
Let $T=\displaystyle \varprojlim_{m}\EC[p^m]$ denote the $p$-adic Tate module of $\EC$. 
Since we assume that \eqref{GHH} holds, we can view $\EC$ as a quotient of the Jacobian of a Shimura curve attached to the pair $(N^+, N^-)$.
Taking linear combinations of Heegner points on the Shimura curve $X_{N^+, N^-}$ defined over ring class fields of $K$ of $p$-power conductor, and mapping them onto $\EC$ via the fixed parameterization (possibly after replacing $\EC$ by an isogenous elliptic curve)
\[
\phi(\EC): \mathrm{Jac}(X_{N^+, N^-}) \longrightarrow \EC,
\]
we can define a system of Heegner classes $z_\infty=(z_n)_{n\ge0}$ in the \textit{compact} Selmer group
\[
\Sel(K, T^{\ac}) := \varprojlim_{n} \varprojlim_{m} \Sel_{p^m}(\EC/K_{\ac,n})
\]
attached to the elliptic curve $\EC$ over $K_{\ac}$.
Here, $\displaystyle z_n\in \varprojlim_m\Sel_{p^m}(\EC/ K_{\ac,n})$, and  $\Sel_{p^m}(\EC/ K_{\ac,n}) \subseteq H^1(K_{\ac,n},\EC[p^m])$ is the usual $p^m$-torsion Selmer group over the $n$-th layer of the anti-cyclotomic $\Zp$-extension of $K$. Furthermore, $z_0$ corresponds to the image of the Heegner point $z_\mathrm{Heeg}$ in the statement of Theorem~\ref{thmA} under the Kummer map.

Write $\mathcal{I}$ to denote the augmentation ideal of $\Lambda_{\cyc}$.
By the work of Perrin-Riou \cite{PR92}, for every $n\geq 0$, there is a $p$-adic height pairing
\[
\langle- ,- \rangle_{n} : \Sel(K_{\ac,n}, T) \times \Sel(K_{\ac,n}, T) \longrightarrow p^{-k}\Zp \otimes_{\Zp} \mathcal{I}/\mathcal{I}^2,
\]
for some non-negative integer $k$, which does not depend on $n$.
Next, define the (cyclotomic) $\Lambda_{\ac}$-adic height pairing
\[
\langle- ,- \rangle_{\Lambda_{\ac}} : \Sel(K, T^{\ac}) \times \Sel(K, T^{\ac}) \longrightarrow \Q_p \otimes_{\Zp}\Lambda_{\ac} \otimes_{\Zp} \mathcal{I}/\mathcal{I}^2
\]
by the formula
\[
\langle a_\infty , b_\infty \rangle_{\Lambda_{\ac}} = \varprojlim_{n} \sum_{\theta \in \Gal(K_{\ac,n}/K)} \langle a_n , b_n^\theta \rangle_{n}\cdot \theta
\]
(see also \cite[\S~11]{nekovar06}).
Using the fixed topological generator $\tau$, it is possible to view $\langle - , - \rangle_{\Lambda_{\ac}}$ as taking values in $\Q_p \otimes_{\Zp}\Lambda_{\ac}$.

We now record a result of Disegni, which we will require crucially in our proof of Theorem~\ref{thmA}.

\begin{theorem}
\label{thm:Castella}
Fix an odd prime $p\geq 5$.
Let $\EC/\Q$ be an elliptic curve with good ordinary reduction at $p$.
Let $K$ be an imaginary quadratic field where $p$ splits and such that the pair $(\EC,K)$ satisfies \eqref{GHH}.
Then the following equality holds (as ideals of $\Q_p \otimes_{\Zp} \Lambda_{\ac}$)
\[
\Big\langle \frac{\partial L_p}{\partial Y}(X,0) \Big\rangle = \Big\langle \langle z_\infty, z_\infty \rangle_{\Lambda_{\ac}} \Big\rangle.
\]
\end{theorem}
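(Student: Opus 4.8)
The plan is to deduce the statement directly from D.~Disegni's $\Lambda$-adic $p$-adic Gross--Zagier formula in \cite{disegni17}, whose content is precisely an identity relating the cyclotomic derivative of a two-variable $p$-adic $L$-function to the anti-cyclotomic $\Lambda$-adic height of a Heegner class. The first task is essentially bookkeeping: I would set up a careful dictionary identifying the objects in the statement---the $p$-adic $L$-function $L_p(X,Y)$, the Heegner system $z_\infty\in\Sel(K,T^\ac)$, and the pairing $\langle-,-\rangle_{\Lambda_\ac}$ of \S\ref{sec: Lambda adic height pairing}---with their counterparts in \emph{op.\ cit.}, specializing Disegni's results (which are formulated for Hilbert modular forms over totally real fields) to the case of the modular elliptic curve $\EC/\Q$ base-changed to $K$. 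Since the present statement is a special case of Disegni's formula, which itself refines the earlier formulas of \cite{Cas17,How05} by dispensing with additional hypotheses, this reduction is the essential content of the proof.

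With the dictionary in place, the argument runs as follows. Assumption~\eqref{GHH} forces the sign of the functional equation of $L(\EC/K,s)$ to be $-1$, whence $L_p(X,0)=0$ by \cite[Proposition~1.4]{Cas17}, as already recorded. Therefore the expansion \eqref{power series in Y} of $L_p$ in the cyclotomic variable $Y=\tau-1$ begins with its linear term, and $\frac{\partial L_p}{\partial Y}(X,0)$ is the genuine leading coefficient of $L_p$ along the cyclotomic direction---the natural object on which a derivative formula can be stated, in analogy with the classical situation where the central value vanishes. Disegni's theorem then identifies this leading coefficient, up to an explicit nonzero scalar arising from local interpolation and period factors, with the self-pairing $\langle z_\infty,z_\infty\rangle_{\Lambda_\ac}$ computed by the Perrin-Riou height pairings $\langle-,-\rangle_n$ assembled in \S\ref{sec: Lambda adic height pairing}.

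The last step is to pass from this equality of elements to the asserted equality of principal ideals in $\Q_p\otimes_{\Zp}\Lambda_\ac$. The scalar relating the two sides lies in $\Qp^\times$, hence is a unit after tensoring with $\Qp$; since multiplication by a unit does not change the ideal generated by an element, the two principal ideals coincide, which is exactly the claimed identity.

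I expect the main obstacle to be the verification underpinning the dictionary, namely that the Heegner class $z_\infty$ and the height pairing used here are normalized to agree, up to the permitted nonzero scalar, with those in Disegni's formula. Concretely one must check that (i) the classes obtained by pushing CM points on $X_{N^+,N^-}$ forward along $\phi(\EC)$ coincide with Disegni's, possibly after the isogeny already allowed in \S\ref{sec: Lambda adic height pairing}; (ii) the Perrin-Riou pairing $\langle-,-\rangle_n$ matches the $p$-adic height appearing in \emph{op.\ cit.}; and (iii) the cyclotomic derivative on the analytic side and the $\mathcal{I}/\mathcal{I}^2$-valued height on the arithmetic side are taken with respect to compatible normalizations of the cyclotomic variable. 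Crucially, because every comparison is made after tensoring with $\Qp$ and only the generated ideal is asserted to be equal, each of these checks need only be carried out up to a nonzero scalar, which is what makes the reduction to \cite{disegni17} clean.
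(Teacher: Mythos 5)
Your proposal is correct and takes essentially the same route as the paper: the paper's proof is precisely a specialization of Disegni's $\Lambda$-adic Gross--Zagier formula \cite[Theorem~C(4)]{disegni17}, taking the totally real field $F$ to be $\Q$, the CM field $E$ to be $K$, and the abelian varieties $A$, $A^\vee$ to be $\EC$. The normalization and bookkeeping issues you rightly flag (in particular the choice of $f\in\pi_A$) are disposed of in the paper by pointing to the restatement in \cite[Theorem~4.3.3]{disegni20} and the results of \cite{CST}.
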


\begin{proof}
This is a special case of \cite[Theorem~C(4)]{disegni17}; the totally real field $F$ (resp. the CM field $E$) is set as $\Q$ (resp. $K$), and $A$ and $A^\vee$ in \textit{loc. cit.} are chosen as $\EC$. See also \cite[Theorem~4.3.3]{disegni20}, where this result is restated; the appropriate choice of $f\in\pi_A$ is discussed in the proof therein, relying on results in \cite{CST}.
\end{proof}

If $N^-=1$, Theorem~\ref{thm:Castella} was established in \cite[Theorem~B]{How05} when $K\neq \Q(\sqrt{-3})$ and the discriminant of $K$ is odd. A similar result was proved in 
\cite[Theorem~A]{Cas17} when $N$ is square-free, $\EC[p]$ is ramified at every prime $\ell \mid N^-$, and the representation $\Gal(\overline{\Q}/K) \rightarrow \mathrm{Aut}_{\Zp}(T)$ is surjective.

\subsection{Iwasawa Main Conjecture}
\label{sec: IMC}
The main result of this article requires the following hypothesis, i.e., one inclusion of the Iwasawa main conjecture
\begin{equation}
    \label{h-IMC} \tag{\textbf{h-IMC}} L_p(X,Y) \in \Char_{\Lambda}(\Sel_{p^\infty}(\EC/K_\infty)^\vee).
\end{equation}
In particular, since the characteristic ideal of $\Sel_{p^\infty}(\EC/K_\infty)^\vee$ lies inside $\Lambda$ by definition, it is assumed implicitly that $L_p(X,Y)\in\Lambda $ (rather than $\Qp\otimes_{\Zp}\Lambda$). For notational simplicity, we shall let $I$ denote the characteristic ideal $\Char_{\Lambda}(\Sel_{p^\infty}(\EC/K_\infty)^\vee)$.

For $(a:b)\in\PP^1(\Zp)$, let $\cL_{a,b}=\pi_{a,b}\left(L_p(X,Y)\right)$. Under \eqref{h-IMC}, taking projection $\pi_{a,b}$ gives
\[
\cL_{a,b}  \in\pi_{a,b}(I).
\]
In particular, it tells us that if $f_{a,b}\nmid L_p(X,Y)$ over $\Lambda$, then $\pi_{a,b}(I)\ne\{0\}$.

The Iwasawa main conjecture for $\EC$ over $K_\infty$ predicts that the following equality of $\Lambda$-ideals holds.
\begin{equation}
\label{IMC}
\tag{\textbf{IMC}} \langle L_p(X,Y) \rangle = I.     
\end{equation}

A proof of \eqref{IMC} was recently announced by A.~Burungale, F.~Castella, and C.~Skinner in \cite{BCS24} for imaginary quadratic fields $K\neq \Q(\sqrt{-3})$ with odd discriminant, under some mild technical hypotheses.
In particular, they work under the assumption that $N^- = 1$, i.e., the strict Heegner hypothesis holds.
Moreover, they assume that $\EC[p]$ is an irreducible $\Gal(\overline{\Q}/\Q)$-module and that the set of `vexing primes' is empty.\footnote{Further progress on \eqref{IMC} has been made in \cite{BSTW} after the first version of this article was written.}

\begin{remark}
Up to normalizing factors (involving congruence number, modular degree, and Manin constant), the $p$-adic $L$-function $\cL_p^{\mathrm{PR}}$ (in \cite{BCS24}) is the same as the 2-variable $p$-adic Rankin $L$-series $L_p(f/K, \Sigma^{(1)})$ attached to a modular form $f$ (in \cite{Cas17,disegni17}) which we denote by $L_p(X,Y)$ in this article.
\end{remark}

\section{Mazur's growth number and the cyclotomic Selmer group}
\label{sec: Cyclotomic}

\subsection{\texorpdfstring{$p$-adic $L$-function over $K_{a,b}$}{}}
\label{sec: p-adic L function over Kab}

In this section, the goal is to study the non-vanishing of the $p$-adic $L$-function $\cL_{a,b}=\pi_{a,b}(L_p(X,Y))$ over the $\Zp$-extension $K_{a,b}$ for when $K_{a,b}\neq K_{\ac}$.

\begin{theorem}\label{thm:derivative}
Let $(a:b)\in\PP^1(\Zp)\setminus\{ (0:1)\}$.
Assume that the hypotheses of Theorem~\ref{thm:Castella} hold and that the $p$-adic height $\langle z_{\mathrm{Heeg}}, z_{\mathrm{Heeg}} \rangle_0$ is non-zero.
Then $\cL_{a,b}\ne0$.
\end{theorem}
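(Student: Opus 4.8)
The plan is to show that $f_{a,b}$ does not divide $L_p(X,Y)$ in $\Qp\otimes_{\Zp}\Lambda$, which is exactly the assertion $\cL_{a,b}=\pi_{a,b}(L_p)\neq 0$. First I would note that, since $(a:b)\neq(0:1)$, the normalization $\min(\ord_p(a),\ord_p(b))=0$ forces $a\neq 0$; consequently $f_{a,b}$ and $f_{0,1}=Y$ are coprime irreducibles of $\Lambda$ (as recorded in Section~\ref{sec:zp-extensions}). Because the sign of the functional equation is $-1$ we have $L_p(X,0)=0$, so $Y\mid L_p$; write $L_p=Y\,g$ with $g\in\Qp\otimes_{\Zp}\Lambda$. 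If $f_{a,b}$ were to divide $L_p=Y\,g$, then coprimality together with the primality of $f_{a,b}$ would give $f_{a,b}\mid g$; applying the ring homomorphism $\pi_{0,1}$ (reduction modulo $Y$) would then force $(1+X)^a-1=\pi_{0,1}(f_{a,b})$ to divide $\pi_{0,1}(g)=\frac{\partial L_p}{\partial Y}(X,0)=:h(X)$ in the domain $\Qp\otimes_{\Zp}\Lambda_{\ac}=\Zp\llbracket X\rrbracket[1/p]$. Since $a\neq 0$, the element $(1+X)^a-1$ is nonzero and vanishes at $X=0$, so the whole argument reduces to proving $h(0)\neq 0$, which contradicts such a divisibility.

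The core of the proof is therefore to establish $h(0)\neq 0$, and here I would combine the hypothesis with Disegni's formula (Theorem~\ref{thm:Castella}). The idea is that the specialization of the $\Lambda_{\ac}$-adic height at $X=0$ recovers the bottom-layer height: under the identification $\Lambda_{\ac}=\Zp\llbracket X\rrbracket$, evaluation at $X=0$ is the augmentation map, i.e.\ the projection to level $n=0$ of $\Lambda_{\ac}=\varprojlim_n\Zp[\Gal(K_{\ac,n}/K)]$. Since $\Gal(K_{\ac,0}/K)$ is trivial, the defining formula for $\langle z_\infty,z_\infty\rangle_{\Lambda_{\ac}}$ shows that its value at $X=0$ is exactly $\langle z_0,z_0\rangle_0=\langle z_{\mathrm{Heeg}},z_{\mathrm{Heeg}}\rangle_0$, which is nonzero by hypothesis; in particular $\langle z_\infty,z_\infty\rangle_{\Lambda_{\ac}}\neq 0$. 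Theorem~\ref{thm:Castella} gives the equality of ideals $\langle h\rangle=\langle\langle z_\infty,z_\infty\rangle_{\Lambda_{\ac}}\rangle$, so in this domain the two generators differ by a unit $u$; as any unit has nonzero image under the augmentation, I obtain $h(0)=u(0)\cdot\langle z_0,z_0\rangle_0\neq 0$.

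Combining the two steps yields the contradiction: $f_{a,b}\mid L_p$ would force $(1+X)^a-1\mid h$ and hence $h(0)=0$, against $h(0)\neq 0$; therefore $f_{a,b}\nmid L_p$ and $\cL_{a,b}\neq 0$. I expect the algebra with $f_{a,b}$ (coprimality with $Y$, reduction modulo $Y$, primality) to be routine given the facts already collected in Section~\ref{sec:zp-extensions}. The delicate point—and the main obstacle—is the bookkeeping of the second step: correctly identifying the augmentation at $X=0$ of the $\Lambda_{\ac}$-adic height pairing with the scalar height $\langle z_{\mathrm{Heeg}},z_{\mathrm{Heeg}}\rangle_0$, and then transporting this non-vanishing across the ideal equality of Theorem~\ref{thm:Castella} down to the constant term of $h$. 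One should also keep track of the distinction between $\Lambda$ and $\Qp\otimes_{\Zp}\Lambda$ and check that $f_{a,b}$ stays prime after inverting $p$, but this is harmless since $f_{a,b}$ is primitive.
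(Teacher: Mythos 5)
Your proof is correct, and it rests on the same crucial arithmetic input as the paper's: Theorem~\ref{thm:Castella} combined with the observation that evaluation at $X=0$ (the augmentation) of $\langle z_\infty,z_\infty\rangle_{\Lambda_{\ac}}$ recovers $\langle z_{\mathrm{Heeg}},z_{\mathrm{Heeg}}\rangle_0$, so that $\frac{\partial L_p}{\partial Y}(0,0)$ is a nonzero multiple of the height; this step is identical in both arguments. Where you genuinely differ is in how that non-vanishing is converted into $\cL_{a,b}\neq0$. The paper argues directly: it substitutes the congruence $Y\equiv(1+X)^{-a/b}-1$ (resp.\ $X\equiv(1+Y)^{-b/a}-1$) modulo $f_{a,b}$ into the expansion \eqref{power series in Y} and computes the linear coefficient of $\cL_{a,b}$ in the coordinate of $\Lambda_{a,b}$, obtaining $-\frac{a}{b}\cdot\frac{\partial L_p}{\partial Y}(0,0)$ when $b\in\Zp^\times$ and $\frac{\partial L_p}{\partial Y}(0,0)$ when $a\in\Zp^\times$. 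You instead argue by contradiction through the UFD structure of $\Qp\otimes_{\Zp}\Lambda$: if $f_{a,b}\mid L_p=Yg$, then coprimality of $f_{a,b}$ with $Y$ and primality of $f_{a,b}$ give $f_{a,b}\mid g$, and applying $\pi_{0,1}$ yields $\bigl((1+X)^a-1\bigr)\mid\frac{\partial L_p}{\partial Y}(X,0)$, forcing the constant term to vanish. Your route is more economical, avoiding the paper's two-case analysis and substitution bookkeeping, but it produces only the qualitative statement $\cL_{a,b}\neq0$; the paper's computation buys more, namely an explicit first-order formula for $\cL_{a,b}$ at the origin (its derivative is an explicit nonzero multiple of the Heegner height), which is of independent interest in the spirit of $p$-adic Gross--Zagier and is what one would want for leading-term applications. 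Your handling of the technical points (that $a\neq0$ follows from the normalization, that $f_{a,b}$ remains prime after inverting $p$, that units of $\Qp\otimes_{\Zp}\Lambda_{\ac}$ have nonzero augmentation) is sound.
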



\begin{proof}
Recall from \eqref{power series in Y} that
\[
L_p(X,Y) = \frac{\partial L_p}{\partial Y}(X,0) Y + \frac{1}{2}\cdot \frac{\partial^2 L_p}{\partial Y^2}(X,0) Y^2 + \ldots
\]
Let $(a:b)\in \PP^1(\Zp)$ such that $a \neq 0$.
After multiplying by a non-zero scalar if necessary, we may assume that $a\in \Zp^\times$ or $b\in\Zp^\times$.
We consider the two cases separately. 
\subsubsection*{\underline{Case 1: \texorpdfstring{$b\in\Zp^\times$}{}}}

It follows from Lemma~\ref{lem:Zp-ext} that
\[
\Gamma_{a,b} = \frac{\overline{\langle \sigma, \tau\rangle}}{\overline{\langle \sigma^a \tau^b\rangle}} = \frac{\overline{\langle \sigma, \tau\rangle}}{\overline{\langle \sigma^{\frac{a}{b}} \tau\rangle}} = \overline{\langle \pi_{a,b}(\sigma)\rangle}
\]
since $\pi_{a,b}(\tau)=\pi_{a,b}(\sigma^{-\frac{a}{b}})$. In particular, the Iwasawa algebra $\Lambda_{a,b}$ may be identified as the power series ring $\Zp\llbracket \pi_{a,b}(X)\rrbracket$. Furthermore, 
\[
Y \equiv (1+X)^{-\frac{a}{b}} - 1 \pmod{f_{a,b}}.
\]
 Therefore, we have
\[
L_p(X,Y) \equiv  \frac{\partial L_p}{\partial Y}(X,0) [(1+X)^{-\frac{a}{b}} - 1] +  \frac{1}{2}\cdot\frac{\partial^2 L_p}{\partial Y^2}(X,0) [(1+X)^{-\frac{a}{b}} - 1]^2 + \ldots\pmod{f_{a,b}}
\]
Set $X_{a,b}=\pi_{a,b}(X)$ and $\Lambda_{a,b}=\Zp\llbracket X_{a,b}\rrbracket$. Identifying $\cL_{a,b}$ with $L_p(X,Y)\pmod{f_{a,b}}$,
we have the following congruences of elements in $\Lambda_{a,b}$
\[
\cL_{a,b} \equiv - \frac{a}{b}\cdot\frac{\partial L_p}{\partial Y}(X_{a,b},0)\cdot X_{a,b} \pmod{\langle X_{a,b}^2\rangle}, 
\]
since $(1+X)^{-\frac{a}{b}}\equiv 1-\frac{a}{b}\cdot X\pmod{\langle X^2\rangle}$.
Taking derivative with respect to $X_{a,b}$ gives
\[
\frac{\partial \cL_{a,b}}{\partial X_{a,b}}\Bigg\vert_{X_{a,b}=0} = -\frac{a}{b}\cdot \frac{\partial L_p}{\partial Y}(0,0).
\]
\vspace{0.2cm}
\subsubsection*{\underline{Case 2: \texorpdfstring{$a\in\Zp^\times$}{}}}
Similar to the previous case, we have $\Lambda_{a,b}=\Zp\llbracket Y_{a,b}\rrbracket$, where $Y_{a,b}=\pi_{a,b}(Y)$ and 
\[
X \equiv (1+Y)^{-\frac{b}{a}} - 1 \pmod{f_{a,b}}.
\]
We deduce analogously
\[
L_p(X,Y) \equiv  \frac{\partial L_p}{\partial Y}((1+Y)^{-\frac{b}{a}} - 1,0) Y +  \frac{1}{2}\cdot\frac{\partial^2 L_p}{\partial Y^2}((1+Y)^{-\frac{b}{a}} - 1,0)Y^2 + \ldots\pmod{f_{a,b}}.
\]
Therefore, the following congruence of elements of $\Lambda_{a,b}$ holds
\[
\cL_{a,b} \equiv \frac{\partial L_p}{\partial Y}((1+Y_{a,b})^{-\frac{b}{a}}-1,0)Y_{a,b} \pmod{\langle Y_{a,b}^2 \rangle }.
\]
Taking derivative with respect to $Y_{a,b}$, we obtain the formula
\[
\frac{\partial \cL_{a,b}}{\partial Y_{a,b}}\Bigg\vert_{Y_{a,b}=0}=\frac{\partial L_p}{\partial Y}((1+0)^{-\frac{b}{a}}-1,0) = \frac{\partial L_p}{\partial Y}(0,0).
\]
\vspace{0.4cm}

In both cases, we can write $\Lambda_{a,b}=\Zp\llbracket Z_{a,b}\rrbracket$, where $Z_{a,b}$ is either $X_{a,b}$ or $Y_{a,b}$, and we have shown that the derivative of $\cL_{a,b}$ with respect to $Z_{a,b}$ evaluated at $Z_{a,b}=0$ is equal to a non-zero multiple of $\frac{\partial L_p}{\partial Y}(0,0)$ (since we exclude the case where $a=0$).
Recall from Theorem~\ref{thm:Castella} that the following equality of $\Qp\otimes_{\Zp}\Zp\llbracket X\rrbracket$-ideals holds
\[
\Big\langle \frac{\partial L_p}{\partial Y}(X,0) \Big\rangle = \Big\langle  \langle z_\infty, z_\infty \rangle_{\Lambda_{\ac}} \Big\rangle.
\]
In particular, we conclude from the discussion in \S~\ref{sec: Lambda adic height pairing} on the height pairing that  $\frac{\partial L_p}{\partial Y}(0,0)$ is equal to  a non-zero multiple of $ \langle z_{\mathrm{Heeg}}, z_{\mathrm{Heeg}} \rangle_0$. In particular, $\cL_{a,b}\ne 0$ whenever $ \langle z_{\mathrm{Heeg}}, z_{\mathrm{Heeg}} \rangle_0\ne0$.
\end{proof}

The $p$-adic height $ \langle z_{\mathrm{Heeg}}, z_{\mathrm{Heeg}} \rangle_0$ is provably non-zero when $\EC$ is a CM elliptic curve and $p$ is a prime of ordinary reduction \cite{Ber83}.
At primes of supersingular reduction of $\EC$ (both in the CM and non-CM case), similar results are proven in \cite{BPR93, kobayashi2013p}.
When $\EC$ is Eisenstein at $p$, results of \cite{BS23} give sufficient conditions (in terms of the $\lambda$-invariant of a certain Selmer group) for the $p$-adic height to be non-zero.
In general, the $p$-adic height of Heegner points can be computed explicitly using the algorithm in \cite{BaCiSt15}, and is expected to be non-zero when $\EC(K)$ is of rank one since the $p$-adic Birch and Swinnerton-Dyer conjecture (as formulated in \cite[\S~10]{MTT}) predicts that the derivative of the cyclotomic $p$-adic $L$-function $\cL_{1,0}$ should be non-zero.

\subsection{Boundedness of Selmer coranks}
We show that the dual Selmer group $\Sel_{p^\infty}(\EC/K_{a,b})^\vee$ is $\Lambda_{a,b}$-torsion whenever $(a:b)\notin\{(0:1)\}$.
For the remainder of the discussion we assume the following hypothesis:
\vspace{0.2cm}
\begin{itemize}
    \item[(\mylabel{tor}{\textbf{tor}})]  $\EC(K)$ has no $p$-torsion.
\end{itemize}
\vspace{0.2cm}

\begin{lemma}
\label{no non trivial pseudo-null}
If \eqref{tor} holds, then $\Sel_{p^\infty}(\EC/K_\infty)^\vee$ contains no non-trivial pseudo-null $\Lambda$-submodule.    
\end{lemma}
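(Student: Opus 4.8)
The plan is to deduce the statement from the standard cohomological criterion, going back to Greenberg, for a dual Selmer group over a $\Zp^2$-extension to contain no non-trivial pseudo-null submodule. This is the two-variable analogue of the classical fact that over a $\Zp$-extension the dual Selmer group has no non-trivial \emph{finite} submodule. The two inputs one needs are the (near-)triviality of the global invariants $H^0(G_\Sigma(K_\infty),\EC[p^\infty])$ and the weak Leopoldt conjecture, namely the vanishing of $H^2(G_\Sigma(K_\infty),\EC[p^\infty])$.

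First I would use \eqref{tor} to control the $H^0$-term. \emph{Claim:} if $\EC(K)[p]=0$, then $\EC(K_\infty)[p^\infty]=0$, so that $H^0(G_\Sigma(K_\infty),\EC[p^\infty])=0$. Indeed, since $K_\infty/K$ is a pro-$p$ extension, a non-zero element of $\EC[p]^{G_{K_\infty}}$ would already live in some finite $p$-power subextension $K'/K$, yielding a non-zero $\F_p[\Gal(K'/K)]$-module $\EC[p]^{G_{K'}}$; as a $p$-group acting on a non-zero $\F_p$-vector space has a non-zero fixed vector, this forces $\EC[p]^{G_K}\ne 0$, contradicting \eqref{tor}. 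A $p$-divisibility argument then upgrades $\EC(K_\infty)[p]=0$ to $\EC(K_\infty)[p^\infty]=0$. For the second input I would invoke the weak Leopoldt conjecture for $\EC[p^\infty]$ over the $\Zp^2$-extension $K_\infty/K$, known by work of Greenberg \cite{Gre_PCMS} (it follows from the cofreeness of $\EC[p^\infty]$ together with the global Euler characteristic computation, using that $K_\infty$ contains the cyclotomic line).

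With these two inputs, the core of the argument is the homological passage to $\mathcal{X}:=\Sel_{p^\infty}(\EC/K_\infty)^\vee$. Starting from the defining sequence
\[
0 \to \Sel_{p^\infty}(\EC/K_\infty) \to H^1(G_\Sigma(K_\infty),\EC[p^\infty]) \to \bigoplus_{v\in\Sigma} J_v(\EC/K_\infty),
\]
the vanishing of $H^2$ (via Poitou--Tate duality) makes the global-to-local map surjective, while $H^0=0$ together with $H^2=0$ forces the dual $H^1(G_\Sigma(K_\infty),\EC[p^\infty])^\vee$ to have projective dimension at most $1$ over the regular local ring $\Lambda$; by the Auslander--Buchsbaum formula such a module has depth at least $2$, hence all its associated primes have height at most $1$, so it contains no non-zero pseudo-null submodule. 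The local duals $J_v(\EC/K_\infty)^\vee$ are reflexive over the relevant quotients of $\Lambda$ (using the ordinary filtration at the primes above $p$), and dualizing the resulting short exact sequence expresses $\mathcal{X}$ as an extension assembled from these pieces. A codimension count on the relevant $\mathrm{Ext}$-modules then shows that no pseudo-null submodule of $\mathcal{X}$ can arise.

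I expect the main obstacle to be precisely this last homological bookkeeping: unlike submodules, \emph{quotients} of modules with no pseudo-null submodule need not inherit the property, so the conclusion cannot simply be read off from the global term. The delicate point is to combine the projective-dimension bound on the global term, the surjectivity of the global-to-local map, and the reflexivity of the local terms in order to bound the codimension of any would-be pseudo-null submodule of $\mathcal{X}$; verifying that the local conditions at the primes above $p$ and at the bad primes genuinely give reflexive (equivalently, $\mathrm{Ext}^2$-free) duals is where the good ordinary hypothesis and the precise structure of $K_\infty/K$ must be used with care.
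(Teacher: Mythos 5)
Your overall strategy---Greenberg's cohomological criterion over the $\Zp^2$-extension, seeded by the vanishing of $\EC(K_\infty)[p^\infty]$ (which you deduce correctly from \eqref{tor}) and by weak Leopoldt---is indeed the circle of ideas underlying the result the paper cites for this lemma, namely \cite[Proposition~6.1]{KLR}; the paper itself gives no argument beyond that citation. However, your proposal has a genuine gap at precisely the decisive step, which you yourself flag as ``the main obstacle.'' Granting surjectivity of the global-to-local map, dualizing gives
\[
0 \to \Big(\bigoplus_{v} J_v(\EC/K_\infty)\Big)^\vee \to H^1(G_\Sigma(K_\infty),\EC[p^\infty])^\vee \to \Sel_{p^\infty}(\EC/K_\infty)^\vee \to 0,
\]
and, as you correctly observe, a quotient of a module with no pseudo-null submodules need not inherit that property; but ``a codimension count on the relevant $\mathrm{Ext}$-modules'' is a placeholder, not an argument. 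The standard way to close this is a concrete lemma: if $0\to A\to B\to C\to 0$ is exact, $A$ is reflexive, and $B$ has no non-trivial pseudo-null submodule, then neither does $C$. (Sketch: if $N/A\subseteq C$ is pseudo-null, then $N/A$ has grade $\geq 2$, so $\Hom_\Lambda(N/A,\Lambda)=\mathrm{Ext}^1_\Lambda(N/A,\Lambda)=0$, whence $N^{*}\cong A^{*}$ and $N^{**}\cong A^{**}=A$; the composite $N\to N^{**}\cong A$ splits the inclusion $A\subseteq N$, exhibiting $N/A$ as a pseudo-null submodule of $B$, which must vanish.) Moreover, the reflexivity of $\big(\bigoplus_v J_v(\EC/K_\infty)\big)^\vee$, which this lemma requires, is itself a substantive verification about local Iwasawa cohomology over $\Lambda$; it needs care exactly at primes that are infinitely decomposed in $K_\infty$, and such primes genuinely occur here because \eqref{GHH} forces the primes dividing $N^-$ to split completely in the anti-cyclotomic tower.

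There is a second gap: surjectivity of the global-to-local map does not follow from $H^2(G_\Sigma(K_\infty),\EC[p^\infty])=0$ ``via Poitou--Tate duality.'' By Cassels--Poitou--Tate, the cokernel of that map is dual to the compact Selmer group $\varprojlim_n \Sel(K_n,T)$ (using the self-duality of $T$ from the Weil pairing), and its vanishing is equivalent to $\Sel_{p^\infty}(\EC/K_\infty)^\vee$ being $\Lambda$-torsion. Torsionness implies weak Leopoldt, but not conversely, so the input you actually need is strictly stronger than the one you invoke. That input is available in this setting---Kato--Rohrlich over $K_\cyc$ together with \cite[Lemma~2.6]{HO10}, exactly as the paper records immediately after the lemma---but it must be cited explicitly. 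Relatedly, your parenthetical justification of weak Leopoldt over $K_\infty$ (``cofreeness plus the global Euler characteristic computation'') is not a proof: Euler-characteristic arguments bound coranks but do not kill $H^2$; the known route is Kato's theorem on the cyclotomic line, propagated to $K_\infty$ by Hochschild--Serre and Nakayama. In short, your scaffolding is the right one, but the two pillars that make it stand---$\Lambda$-cotorsionness of the Selmer group, and the reflexivity-plus-splitting argument for the quotient---are exactly the parts left unproven.
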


\begin{proof}
See \cite[Proposition~6.1]{KLR}.    
\end{proof}

By the results of K.~Kato \cite{kato04} and D.~Rohrlich \cite{rohrlich88}, we know that $\Sel_{p^\infty}(\EC/K_\cyc)^\vee$ is $\Lambda_{\cyc}$-torsion since $\EC$ is an elliptic curve defined over $\Q$ and $K$ is an imaginary quadratic number field (and hence an abelian extension of $\Q$).
It implies that $\Sel_{p^\infty}(\EC/K_\infty)^\vee$ is $\Lambda$-torsion; see  \cite[Lemma~2.6]{HO10}, whose proof is based on results in \cite{BH97}).
The structure theorem of finitely generated $\Lambda$-modules combined with Lemma~\ref{no non trivial pseudo-null} asserts the existence of the following short exact sequence
\begin{equation}
\label{eq:structure}    
0 \longrightarrow \Sel_{p^\infty}(\EC/K_\infty)^\vee \longrightarrow \bigoplus_{i=1}^m\frac{\Lambda}{I_i} \longrightarrow N \longrightarrow 0, 
\end{equation}
where $I_1,\dots,I_m $ are principal ideals of $\Lambda$ and $\prod_{i=1}^mI_i=I = \Char_{\Lambda}(\Sel_{p^\infty}(\EC/K_\infty)^\vee)$ is the characteristic ideal of $\Sel_{p^\infty}(\EC/K_\infty)^\vee$ as a $\Lambda$-module and $N$ is a pseudo-null $\Lambda$-module.

\begin{lemma}
\label{SHab is cotorsion}
Let $(a:b)\in\PP^1(\Zp)\setminus\{(0:1)\}$.
Suppose that \eqref{GHH}, \eqref{tor}, and \eqref{h-IMC} hold.
If $\langle z_{\mathrm{Heeg}}, z_{\mathrm{Heeg}} \rangle_0$ is non-zero, then $\Sel_{p^\infty}(\EC/K_\infty)_{H_{a,b}}^\vee$ is a finitely generated torsion $\Lambda_{a,b}$-module.
\end{lemma}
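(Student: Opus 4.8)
The plan is to push the structure sequence \eqref{eq:structure} through the homology of the procyclic group $H_{a,b}$ and to feed in the nonvanishing of $\cL_{a,b}$ supplied by Theorem~\ref{thm:derivative}. Write $\cS:=\Sel_{p^\infty}(\EC/K_\infty)^\vee$, so that the object in question is the $H_{a,b}$-coinvariants $\cS_{H_{a,b}}$. Recall that $H_{a,b}=\overline{\langle\sigma^a\tau^b\rangle}\cong\Zp$ is procyclic with topological generator $\gamma=\sigma^a\tau^b$, and that under $\Lambda\cong\Zp\llbracket X,Y\rrbracket$ the augmentation element $\gamma-1$ is exactly $f_{a,b}$. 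Hence for any $\Lambda$-module $M$ one has $H_0(H_{a,b},M)=M_{H_{a,b}}=M/f_{a,b}M$ and $H_i(H_{a,b},M)=0$ for $i\ge 2$. Since $\cS$ is finitely generated over $\Lambda$, its coinvariants $\cS_{H_{a,b}}$ are automatically finitely generated over $\Lambda_{a,b}$, so the entire content of the lemma is the $\Lambda_{a,b}$-torsionness of $\cS_{H_{a,b}}$.

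First I would take the $H_{a,b}$-homology long exact sequence of \eqref{eq:structure}. Writing $P:=\bigoplus_{i=1}^m\Lambda/I_i$, its tail reads
\[
H_1(H_{a,b},N)\longrightarrow \cS_{H_{a,b}}\longrightarrow P_{H_{a,b}}\longrightarrow N_{H_{a,b}}\longrightarrow 0.
\]
Because $N$ is pseudo-null, Lemma~\ref{cohomology groups of pseudo-null modules are torsion} shows that $H_1(H_{a,b},N)$ is $\Lambda_{a,b}$-torsion, so the kernel of $\cS_{H_{a,b}}\to P_{H_{a,b}}$, being a quotient of $H_1(H_{a,b},N)$, is torsion. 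As torsion $\Lambda_{a,b}$-modules are closed under submodules and extensions, it then suffices to prove that $P_{H_{a,b}}$ is $\Lambda_{a,b}$-torsion.

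Now $P_{H_{a,b}}=\bigoplus_{i=1}^m(\Lambda/I_i)_{H_{a,b}}=\bigoplus_{i=1}^m\Lambda_{a,b}/\pi_{a,b}(I_i)$, and writing $I_i=(g_i)$ this is $\bigoplus_i\Lambda_{a,b}/(\pi_{a,b}(g_i))$. Since $\Lambda_{a,b}\cong\Zp\llbracket Z_{a,b}\rrbracket$ is a domain, each summand is torsion as soon as $\pi_{a,b}(g_i)\ne 0$, i.e.\ as soon as $f_{a,b}\nmid g_i$. This is where the analytic input enters: by \eqref{h-IMC} the product $\prod_i g_i$ generates $I$ and $L_p(X,Y)\in I$, hence $\prod_i g_i\mid L_p(X,Y)$ in $\Lambda$; on the other hand Theorem~\ref{thm:derivative} (which uses $\langle z_{\mathrm{Heeg}},z_{\mathrm{Heeg}}\rangle_0\ne 0$) gives $\cL_{a,b}=\pi_{a,b}(L_p)\ne 0$, that is, $f_{a,b}\nmid L_p(X,Y)$. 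Since $f_{a,b}$ is prime in the UFD $\Lambda$, it divides none of the factors $g_i$, so every summand of $P_{H_{a,b}}$ is torsion and the lemma follows.

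The delicate points are the two ends of the argument rather than its middle. First, one must be sure that the pseudo-null contribution really lands in the torsion part: this is handled by Lemma~\ref{cohomology groups of pseudo-null modules are torsion}, which in turn relies on \eqref{tor} through Lemma~\ref{no non trivial pseudo-null} to guarantee that \eqref{eq:structure} exists with pseudo-null cokernel $N$ (rather than merely a pseudo-isomorphism in the wrong direction). Second, and most importantly, one must cleanly translate the analytic nonvanishing $\cL_{a,b}\ne0$ into the algebraic divisibility statement $f_{a,b}\nmid g_i$; this rests precisely on \eqref{h-IMC} together with the primality of $f_{a,b}$, and it is the step that genuinely requires the hypothesis on the $p$-adic height.
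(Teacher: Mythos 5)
Your proposal is correct and follows essentially the same route as the paper: both take the $H_{a,b}$-homology sequence of \eqref{eq:structure}, use Lemma~\ref{cohomology groups of pseudo-null modules are torsion} to dispose of the pseudo-null contributions, and combine Theorem~\ref{thm:derivative} with \eqref{h-IMC} to conclude that each $\Lambda_{a,b}/\pi_{a,b}(I_i)$ is torsion. Your spelled-out step that $f_{a,b}\nmid L_p(X,Y)$ forces $f_{a,b}\nmid g_i$ for every $i$ (by primality of $f_{a,b}$ in the UFD $\Lambda$) is exactly the implicit content of the paper's assertion that $\pi_{a,b}(I)\ne 0$ makes each summand torsion.
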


\begin{proof}
To prove the lemma, we consider the following exact sequence
\[
H_1(H_{a,b}, N) \longrightarrow \Sel_{p^\infty}(\EC/K_\infty)^{\vee}_{H_{a,b}} \longrightarrow\bigoplus_{i=1}^m \frac{\Lambda_{a,b}}{\pi_{a,b}(I_i)} \longrightarrow H_0(H_{a,b}, N) \longrightarrow 0,
\]
which is induced by the short exact sequence \eqref{eq:structure}.
Theorem~\ref{thm:derivative} says that $\cL_{a,b}\ne0$ under our running hypotheses. Thus, \eqref{h-IMC} implies that $\pi_{a,b}(I)\ne0$. In particular, $\frac{\Lambda_{a,b}}{\pi_{a,b}(I_i)}$ is $\Lambda_{a,b}$-torsion for $i=1,\dots,m$.
Since $N$ is a pseudo-null $\Lambda$-module, Lemma~\ref{cohomology groups of pseudo-null modules are torsion} asserts that $H_0(H_{a,b}, N)$ and $H_1(H_{a,b}, N)$ are both $\Lambda_{a,b}$-torsion.
Hence, $\Sel_{p^\infty}(\EC/K_\infty)_{H_{a,b}}^\vee$ is a $\Lambda_{a,b}$-torsion module.
\end{proof}

\begin{proposition}
Under the same assumptions as Lemma~\ref{SHab is cotorsion}, the Selmer group $\Sel_{p^\infty}(\EC/K_{a,b})^\vee$ is a finitely generated $\Lambda_{a,b}$-torsion module.    \label{prop:Sel-tor}
\end{proposition}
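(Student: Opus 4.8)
The goal is to pass from the control of the $H_{a,b}$-coinvariants of the $\Lambda$-Selmer group (Lemma~\ref{SHab is cotorsion}) to the statement about $\Sel_{p^\infty}(\EC/K_{a,b})^\vee$ itself. The natural tool is a control theorem relating $\Sel_{p^\infty}(\EC/K_{a,b})$ to the $H_{a,b}$-invariants of $\Sel_{p^\infty}(\EC/K_\infty)$, equivalently relating the Pontryagin duals via $H_{a,b}$-coinvariants. Concretely, I would produce a restriction map
\[
\Sel_{p^\infty}(\EC/K_{a,b}) \longrightarrow \Sel_{p^\infty}(\EC/K_\infty)^{H_{a,b}}
\]
and argue that its kernel and cokernel are cofinitely generated over $\Zp$ (indeed controlled by local cohomology groups $H^i(H_{a,b}, \EC(K_\infty)[p^\infty])$ and the analogous local terms at the places in $\Sigma$). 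Dualizing, this says the cokernel and kernel of
\[
\Sel_{p^\infty}(\EC/K_\infty)^\vee_{H_{a,b}} \longrightarrow \Sel_{p^\infty}(\EC/K_{a,b})^\vee
\]
are finitely generated $\Zp$-modules, hence $\Lambda_{a,b}$-torsion (in fact they are killed by a power of the augmentation ideal). Since Lemma~\ref{SHab is cotorsion} gives that the source $\Sel_{p^\infty}(\EC/K_\infty)^\vee_{H_{a,b}}$ is $\Lambda_{a,b}$-torsion, and the map has $\Lambda_{a,b}$-torsion kernel and cokernel, the target $\Sel_{p^\infty}(\EC/K_{a,b})^\vee$ is $\Lambda_{a,b}$-torsion as well.

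**Steps, in order.** First I would invoke the hypothesis \eqref{tor} (no $p$-torsion in $\EC(K)$, hence none in $\EC(K_{a,b})$ or $\EC(K_\infty)$ since these are pro-$p$ extensions) to force the global control kernel $H^1(H_{a,b}, \EC(K_\infty)[p^\infty])$ to vanish, or at least be finite. Second, I would bound the local error terms $\bigoplus_{v}\ker$ or $\coker$ of the localization-of-cohomology maps by their $\Zp$-coranks, which are finite because $H_{a,b}\cong\Zp$ has cohomological dimension one and the relevant decomposition groups give finite contributions; this is the standard Greenberg/Mazur control-theorem computation. Third, I would assemble the snake-lemma diagram comparing the defining sequences of $\Sel_{p^\infty}(\EC/K_{a,b})$ and $\Sel_{p^\infty}(\EC/K_\infty)^{H_{a,b}}$, concluding that the comparison map has finite kernel and cofinitely-generated-over-$\Zp$ cokernel. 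Finally, dualizing and combining with Lemma~\ref{SHab is cotorsion} and the fact that a finitely generated $\Zp$-module is $\Lambda_{a,b}$-torsion closes the argument.

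**The main obstacle.** The delicate point is the control theorem over the \emph{one-variable} extension $K_{a,b}$ sitting inside the two-variable $K_\infty$: one must verify that the local conditions defining the Selmer group over $K_{a,b}$ match the $H_{a,b}$-invariants of those over $K_\infty$, and that the error terms are genuinely $\Lambda_{a,b}$-torsion rather than merely cofinitely generated. The places above $p$ (where the ordinary/Panchishkin local condition is imposed) require care, as do the inert-in-$K$ bad primes that split completely in $K_\ac$ and are therefore infinitely decomposed in $K_\infty$; however, since $(a:b)\neq(0:1)$, the extension $K_{a,b}/K$ is not contained in $K_\ac$, so these problematic primes remain finitely decomposed in $K_{a,b}/K$ and their local contributions stay finite. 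I would lean on the cohomological triviality coming from $\mathrm{cd}_p(H_{a,b})=1$ together with \eqref{tor} to kill the global kernel, and on finiteness of the number of primes of $K_{a,b}$ above each $v\in\Sigma$ to control the local terms, so that the net comparison error is $\Lambda_{a,b}$-torsion and the proposition follows.
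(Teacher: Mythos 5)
Your core argument coincides with the paper's proof: both compare $\Sel_{p^\infty}(\EC/K_{a,b})$ with $\Sel_{p^\infty}(\EC/K_\infty)^{H_{a,b}}$ via the fundamental diagram, use \eqref{tor} together with inflation--restriction to see that the comparison map $\alpha$ is injective (since $\EC(K)[p]=0$ forces $\EC(K_\infty)[p]=0$), dualize, and feed the result into Lemma~\ref{SHab is cotorsion}. The difference is that you set out to prove a full control theorem, bounding $\coker\alpha$ through the local terms, whereas the paper stops at injectivity: dualizing the injection $\Sel_{p^\infty}(\EC/K_{a,b})\hookrightarrow\Sel_{p^\infty}(\EC/K_\infty)^{H_{a,b}}$ already gives a surjection $\Sel_{p^\infty}(\EC/K_\infty)^\vee_{H_{a,b}}\twoheadrightarrow\Sel_{p^\infty}(\EC/K_{a,b})^\vee$, and a quotient of a finitely generated torsion $\Lambda_{a,b}$-module is again finitely generated and torsion; the kernel of this dual surjection (the dual of $\coker\alpha$) plays no role whatsoever. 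This economy matters because the cokernel analysis is exactly where your proposal overreaches. It is not true that every prime of $\Sigma$ is finitely decomposed in $K_{a,b}$ whenever $(a:b)\neq(0:1)$: a bad prime that is \emph{split} in $K$ has procyclic decomposition group $D\subseteq G_\infty\cong\Zp^2$, and for the unique $(a:b)$ whose $H_{a,b}$ contains $D$ --- which need not be $(0:1)$; your argument rules this out only for the inert primes --- that prime splits completely in $K_{a,b}$. For such an $(a:b)$ the corresponding local kernel is an infinite direct sum of finite groups, hence \emph{not} cofinitely generated over $\Zp$ as you assert (its dual is still a finitely generated $\Lambda_{a,b}$-torsion module, being killed by a power of $p$, so even your route could be repaired). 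Fortunately none of this is needed: excising the entire local discussion from your proposal leaves precisely the paper's proof.
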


\begin{proof}
Recall the following fundamental diagram
\[
\begin{tikzcd}
0 \arrow[r] &
\Sel_{p^\infty}(\EC/K_{a,b}) \arrow[r] \arrow[d,"\alpha"] &
H^1(G_{\Sigma}(K_{a,b}),\EC[p^\infty]) \arrow[r,"\pi"] \arrow[d,"\beta"] &
\prod_{v \in \Sigma(\cK)} J_v(\EC/K_{a,b}) \arrow[d,"\gamma=
\prod \gamma_v"] 
\\
0 \arrow[r] &
\Sel_{p^\infty}(\EC/K_\infty)^{H_{a,b}} \arrow[r] &
H^1(G_{\Sigma}(K_\infty),\EC[p^\infty])^{H_{a,b}} \arrow[r] &
\prod_{v \in \Sigma(K_\infty)} J_v(\EC/K_\infty)^{H_{a,b}}, 
\end{tikzcd}
\]
which is induced from the central map $\beta$ given by restriction in cohomology.
The first column of the diagram gives the exact sequence
\begin{equation}\label{eq:kernel}
0 \longrightarrow \ker{\alpha} \longrightarrow \Sel_{p^\infty}(\EC/K_{a,b}) \longrightarrow \Sel_{p^\infty}(\EC/K_\infty)^{H_{a,b}}.
\end{equation}
By the inflation-restriction exact sequence,
\[
\ker \beta = H^1(H_{a,b}, \EC(K_\infty)[p^\infty]).
\]
Thus, it follows from the commutative diagram above that
\[
\ker \alpha \hookrightarrow H^1(H_{a,b}, \EC(K_\infty)[p^\infty]).
\]
Since we assume that $\EC(K)[p]=0$, it follows that $\EC(K_\infty)[p]=0$ (see for example, \cite[I.6.13]{NSW}); hence $\ker(\alpha)=0$.
Therefore, taking duals in \eqref{eq:kernel} gives a surjection
\[
\begin{tikzcd} 
\Sel_{p^\infty}(\EC/K_\infty)_{H_{a,b}}^\vee \arrow[r, tail, twoheadrightarrow] & \Sel_{p^\infty}(\EC/K_{a,b})^\vee.
\end{tikzcd}
\]
We have shown in Lemma~\ref{SHab is cotorsion} that $\Sel_{p^\infty}(\EC/K_\infty)_{H_{a,b}}^\vee$ is a finitely generated $\Lambda_{a,b}$-torsion module.
Hence, we conclude  $\Sel_{p^\infty}(\EC/K_{a,b})^\vee$ is a finitely generated torsion $\Lambda_{a,b}$-module.
\end{proof}

We are now ready to give the proof of Theorem~\ref{thmA}.

\begin{theorem*}
Fix an odd prime $p\geq 5$.
Let $\EC/\Q$ be a non-CM elliptic curve with good ordinary reduction at $p$.
Let $K$ be an imaginary quadratic field where $p$ splits, and such that \eqref{GHH}, \eqref{h-IMC} and \eqref{tor} hold. If the $p$-adic height $ \langle z_{\mathrm{Heeg}}, z_{\mathrm{Heeg}} \rangle_0\ne 0$, then \ref{Mazur-conj} holds.    
\end{theorem*}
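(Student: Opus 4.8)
The plan is to verify \ref{Mazur-conj} for $\EC$ by running over all $\Zp$-extensions $\cK = K_{a,b}$, $(a:b)\in\PP^1(\Zp)$, and computing the relevant growth number. Since $\EC$ is non-CM the pair $(\EC,K)$ is generic, and \eqref{GHH} forces the sign of the functional equation to be $-1$; thus the conjecture predicts $c=0$ for every $\cK\neq K_{\ac}$ and $c=1$ for $\cK=K_{\ac}$, and the exceptional case $c=2$ never occurs. Throughout I use the standard principle that, for a $\Zp$-extension $\cK/K$ with $\Gamma=\Gal(\cK/K)$, if $\Sel_{p^\infty}(\EC/\cK)^\vee$ is a finitely generated $\Zp\llbracket\Gamma\rrbracket$-module of rank $r$, then control theory gives $\corank_{\Zp}\Sel_{p^\infty}(\EC/\cK_n)=rp^n+O(1)$; hence the growth number is exactly this $\Lambda_{a,b}$-rank, and the whole theorem reduces to computing it.

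For the first case, fix $(a:b)\in\PP^1(\Zp)\setminus\{(0:1)\}$, so $\cK=K_{a,b}\neq K_{\ac}$. Under \eqref{GHH}, \eqref{tor}, \eqref{h-IMC} and the hypothesis $\langle z_{\mathrm{Heeg}},z_{\mathrm{Heeg}}\rangle_0\neq0$, Proposition~\ref{prop:Sel-tor} shows that $\Sel_{p^\infty}(\EC/K_{a,b})^\vee$ is a finitely generated $\Lambda_{a,b}$-torsion module. Its $\Lambda_{a,b}$-rank is therefore $0$, so $\corank_{\Zp}\Sel_{p^\infty}(\EC/\cK_n)=O(1)$ and $c=0$, exactly as \ref{Mazur-conj} predicts. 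This disposes of every $\Zp$-extension other than the anticyclotomic one.

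The remaining, essential case is $\cK=K_{\ac}$, where I must prove $c=1$, i.e.\ that $\Sel_{p^\infty}(\EC/K_{\ac})^\vee$ has $\Lambda_{\ac}$-rank exactly $1$. First I would establish the upper bound. Because the sign is $-1$ we have $L_p(X,0)=0$, so $Y\mid L_p$; write $L_p(X,Y)=Y\,G(X,Y)$. By \eqref{power series in Y}, $G(X,0)=\frac{\partial L_p}{\partial Y}(X,0)$, and the computation in the proof of Theorem~\ref{thm:derivative} identifies $G(0,0)=\frac{\partial L_p}{\partial Y}(0,0)$ with a non-zero multiple of $\langle z_{\mathrm{Heeg}},z_{\mathrm{Heeg}}\rangle_0\neq0$; in particular $Y\nmid G$, whence $\ord_Y(L_p)=1$. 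Writing $I=\langle g\rangle$ for the (principal) characteristic ideal, \eqref{h-IMC} gives $g\mid L_p$ and so $\ord_Y(g)\leq1$. Combining the structure sequence \eqref{eq:structure}, the long exact homology sequence for $H_{0,1}=\Gal(K_\infty/K_{\ac})$, and Lemma~\ref{cohomology groups of pseudo-null modules are torsion} (so the pseudo-null contributions are $\Lambda_{\ac}$-torsion), one computes $\rank_{\Lambda_{\ac}}\bigl(\Sel_{p^\infty}(\EC/K_\infty)^\vee\bigr)_{H_{0,1}}=\ord_Y(g)\leq1$. Running the fundamental diagram exactly as in Proposition~\ref{prop:Sel-tor} (valid for $(0:1)$ as well, since \eqref{tor} still yields $\EC(K_\infty)[p]=0$ and hence $\ker\alpha=0$) produces a surjection $\bigl(\Sel_{p^\infty}(\EC/K_\infty)^\vee\bigr)_{H_{0,1}}\twoheadrightarrow\Sel_{p^\infty}(\EC/K_{\ac})^\vee$, so $\rank_{\Lambda_{\ac}}\Sel_{p^\infty}(\EC/K_{\ac})^\vee\leq1$.

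For the lower bound, which I expect to be the main obstacle, I would use Theorem~\ref{thm:Castella} together with the non-vanishing of the height. That theorem gives the equality of ideals $\langle\langle z_\infty,z_\infty\rangle_{\Lambda_{\ac}}\rangle=\langle G(X,0)\rangle$ in $\Qp\otimes_{\Zp}\Lambda_{\ac}$, and we have just seen $G(X,0)\neq0$. Hence the $\Lambda_{\ac}$-adic height $\langle z_\infty,z_\infty\rangle_{\Lambda_{\ac}}$ is non-zero in the domain $\Qp\otimes_{\Zp}\Lambda_{\ac}$; by the $\Lambda_{\ac}$-sesquilinearity of the pairing this forces the Heegner class $z_\infty$ to be non-torsion in the compact Selmer group $\Sel(K,T^{\ac})$, so $\rank_{\Lambda_{\ac}}\Sel(K,T^{\ac})\geq1$. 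It then remains to transfer this bound to the discrete side: via Poitou--Tate duality for the self-dual module $T$ over the anticyclotomic tower, where the ordinary local conditions at $\fp$ and $\overline{\fp}$ are interchanged by complex conjugation so that the local correction terms cancel, one obtains $\corank_{\Lambda_{\ac}}\Sel_{p^\infty}(\EC/K_{\ac})=\rank_{\Lambda_{\ac}}\Sel(K,T^{\ac})\geq1$. Combining with the upper bound gives rank exactly $1$, hence $\corank_{\Zp}\Sel_{p^\infty}(\EC/K_{\ac,n})=p^n+O(1)$ and $c=1$, completing the verification of \ref{Mazur-conj}. The two delicate points are the passage from the non-vanishing $\Lambda_{\ac}$-adic height to non-torsionness of $z_\infty$, and the precise compact-to-discrete rank comparison in the anticyclotomic split setting; all other steps are bookkeeping with the structure theorem and the projections $\pi_{a,b}$, since \eqref{h-IMC} by itself yields only the upper bound.
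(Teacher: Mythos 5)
Your handling of the extensions $K_{a,b}$ with $(a:b)\neq(0:1)$ agrees with the paper: both arguments reduce to Proposition~\ref{prop:Sel-tor}, after which boundedness of the coranks follows (the paper deduces it from the injection $\Sel_{p^\infty}(\EC/L)\hookrightarrow\Sel_{p^\infty}(\EC/K_{a,b})$ supplied by \eqref{tor} rather than from a control theorem, but the conclusion $c=0$ is the same). The divergence is in the anticyclotomic case, and this is where your proposal has a genuine gap. The paper does not try to extract $\corank_{\Lambda_{\ac}}\Sel_{p^\infty}(\EC/K_{\ac})=1$ from \eqref{h-IMC} at all: it invokes Howard's theorem \cite[Theorem~B]{How_duke}, which states that for $p$ good ordinary and $z_{\mathrm{Heeg}}$ non-torsion (a consequence of $\langle z_{\mathrm{Heeg}},z_{\mathrm{Heeg}}\rangle_0\neq0$, since torsion classes have trivial height in the torsion-free target of the pairing), the $\Lambda_{\ac}$-corank is exactly $1$; it then applies \cite[Corollary~4.12]{Gre_PCMS} to get $\corank_{\Zp}\Sel_{p^\infty}(\EC/K_{\ac,n})=p^n+O(1)$. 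Your upper bound is actually a correct alternative to half of this: $\ord_Y L_p=1$ from Theorem~\ref{thm:Castella} plus the height hypothesis, $\ord_Y g\le 1$ from \eqref{h-IMC}, the coinvariants computation via \eqref{eq:structure} and Lemma~\ref{cohomology groups of pseudo-null modules are torsion}, and the fundamental-diagram surjection (which indeed remains valid at $(0:1)$, since \eqref{tor} still forces $\ker\alpha=0$) do give $\corank_{\Lambda_{\ac}}\Sel_{p^\infty}(\EC/K_{\ac})\le1$.

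The gap is your lower bound. From $\langle z_\infty,z_\infty\rangle_{\Lambda_{\ac}}\neq0$ you correctly conclude, using sesquilinearity and the fact that $\Qp\otimes_{\Zp}\Lambda_{\ac}$ is a domain, that $z_\infty$ is non-torsion, so $\rank_{\Lambda_{\ac}}\Sel(K,T^{\ac})\ge1$. But the transfer to $\corank_{\Lambda_{\ac}}\Sel_{p^\infty}(\EC/K_{\ac})\ge1$ is asserted, not proved: ``Poitou--Tate duality \dots\ so that the local correction terms cancel'' is not an argument. The comparison $\rank_{\Lambda_{\ac}}\Sel(K,T^{\ac})=\corank_{\Lambda_{\ac}}\Sel_{p^\infty}(\EC/K_{\ac})$ is true here, but it is a theorem of comparable depth to what you are proving, not bookkeeping: one needs either Nekov\'a\v{r}-type duality for Selmer complexes, or a hands-on argument combining (i) the level-by-level identity $\rank_{\Zp}\Sel(K_{\ac,n},T)=\corank_{\Zp}\Sel_{p^\infty}(\EC/K_{\ac,n})$ (via the Tate module of $\Sha$), (ii) a control statement comparing $\Sel(K,T^{\ac})_{\Gal(K_\infty/K_{\ac})}$-type coinvariants with finite-level compact Selmer groups (using that coinvariants of Iwasawa cohomology inject into finite-level cohomology, with kernels of bounded rank), and (iii) the control theorem on the discrete side. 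Your own text flags this as a ``delicate point,'' which is accurate; as written, the $c=1$ case is not established. The quickest repair is the paper's route: cite \cite[Theorem~B]{How_duke} together with \cite[Corollary~4.12]{Gre_PCMS}, or else supply a precise reference for the compact/discrete rank equality in the anticyclotomic ordinary setting.
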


\begin{proof}
Let $(a:b)\in\PP^1(\Zp)\setminus\{(0:1)\}$.
It follows from Proposition~\ref{prop:Sel-tor} that the $\Lambda_{a,b}$-module $\Sel_{p^\infty}(\EC/K_{a,b})^\vee$ is finitely generated and torsion.
Let $L$ be a finite subextension of $K_{a,b}/K$.
Then hypothesis \eqref{tor} implies that the restriction map
\[
H^1(L,\EC[p^\infty])\longrightarrow H^1(K_{a,b},\EC[p^\infty])
\]
is injective. In particular, this induces an injection
\[
\Sel_{p^\infty}(\EC/L)\hookrightarrow \Sel_{p^\infty}(\EC/K_{a,b}).
\]
Therefore, the $\Zp$-corank of $\Sel_{p^\infty}(\EC/L)$ is uniformly bounded by the $\Zp$-rank of $\Sel_{p^\infty}(\EC/K_{a,b})^\vee$.

In \cite[Theorem~B]{How_duke}, Howard proved that when $p$ is a prime of good ordinary reduction of $\EC$ and the Heegner point $z_{\textrm{Heeg}}$ of $\EC$ over $K$ is non-torsion, the Selmer group $\Sel_{p^\infty}(\EC/K_{\ac})$ has $\Lambda_{\ac}$-corank 1.
Therefore, it follows from \cite[Corollary~4.12]{Gre_PCMS} that $\corank_{\Zp}\Sel(\EC/K_{\ac,n})=p^n+O(1)$ for $n\gg0$.
Hence, we conclude that \ref{Mazur-conj} holds.
\end{proof}

We conclude this article with a remark on the supersingular case.

\begin{remark}\label{rk:ss}
Suppose in this remark that $\EC$ has good supersingular reduction at $p$ with $a_p(\EC)=0$.
One may attempt to bound the Selmer coranks of $\EC$ through four signed Selmer groups, namely $\Sel^{++}(\EC/K_\infty)$, $\Sel^{--}(\EC/K_\infty)$, $\Sel^{+-}(\EC/K_\infty)$ and $\Sel^{-+}(\EC/K_\infty)$, as constructed in \cite{kim14}.
These Selmer groups are related to D.~Loeffler's signed $p$-adic $L$-functions (which we denote by $L_p^{\star\bullet}(X,Y)$, $\star,\bullet\in\{+,-\}$) constructed in \cite{loeffler13} via signed Iwasawa main conjectures. Indeed, a link between the classical Selmer group and the signed Selmer groups has been studied in \cite{leisprung}.
    
It is possible to show that under appropriate hypotheses, the anti-cyclotomic specializations of $\Sel^{++}(\EC/K_\infty)$ and $\Sel^{--}(\EC/K_\infty)$ are of corank one over $\Lambda_\ac$, while the counterparts of $\Sel^{+-}(\EC/K_\infty)$ and $\Sel^{-+}(\EC/K_\infty)$ are cotorsion (see \cite[Theorem~A.12]{GHKL}).
The plus and minus Heegner points constructed in \cite{LongoVigni2} and \cite{castellawan} should allow us to calculate the cyclotomic derivatives of $L_p^{++}$ and $L_p^{--}$. Our proof of Proposition~\ref{prop:Sel-tor} would generalize to show that $\Sel^{++}(\EC/K_{a,b})^\vee$ and $\Sel^{--}(\EC/K_{a,b})^\vee$ are $\Lambda_{a,b}$-torsion for $(a:b)\neq(0:1)$.

However, the same proof does not apply to the mixed-signed Selmer groups. 
Instead, if $\star\ne\bullet$, we would only be able to show that the derivative of $\pi_{a,b}\left(L_p^{\star\bullet}\right)$ is a linear combination of $\displaystyle\frac{\partial L_p^{\star\bullet}}{\partial X}(0,0)$ and $\displaystyle\frac{\partial L_p^{\star\bullet}}{\partial Y}(0,0)$, which can be zero for certain $(a:b)\ne(0:1)$.
Consequently, this is not sufficient to deduce the validity of \ref{Mazur-conj}.
\end{remark}

\bibliographystyle{amsalpha}
\bibliography{references}

\end{document}